\newcommand{\ZZ}{\ensuremath{\mathbb{Z}}}
\newcommand{\RR}{\ensuremath{\mathbb{R}}}
\newcommand{\CC}{\ensuremath{\mathbb{C}}}
\newcommand{\sgn}{\ensuremath{\mathrm{sgn}}}
\newcommand{\Mat}[2]{\ensuremath{\mathrm{M}_{#1}(#2)}}
\newcommand{\MatT}[2]{\ensuremath{\mathrm{M}^\mathrm{T}_{#1}(#2)}}
\newcommand{\GL}[1]{\ensuremath{\mathrm{GL}_{#1}}}
\newcommand{\SL}[1]{\ensuremath{\mathrm{SL}_{#1}}}
\newcommand{\Sp}[1]{\ensuremath{\mathrm{Sp}_{#1}}}
\newcommand{\U}[1]{\ensuremath{\mathrm{U}_{#1}}}
\newcommand{\SU}[1]{\ensuremath{\mathrm{SU}_{#1}}}
\newcommand{\tr}{\ensuremath{\mathrm{tr}}}
\newcommand{\td}{\tilde}
\newcommand{\thup}{\ensuremath{\textrm{th}}}
\newcommand{\ttu}{\ensuremath{\text{\tt u}}}
\newcommand{\ttv}{\ensuremath{\text{\tt v}}}
\newcommand{\sk}{\ensuremath{\mathrm{sk}}}
\newcommand{\hypergeometric}[2]{\ensuremath{{}_{#1} {\rm F}_{#2}}}
\newcommand{\HH}{{\mathbb H}}
\newcommand{\N}{{\mathbb N}}
\newcommand{\Z}{{\mathbb Z}}
\newcommand{\R}{{\mathbb R}}
\newcommand{\C}{{\mathbb C}}
\newcommand{\PP}{{\mathbb P}}
\newcommand{\ra}{\rightarrow}
\newcommand{\trans}[1]{\,^t\hspace{-0.1em}{#1}}
\newcommand{\rank}{\ensuremath{\mathop{\rm rank}\,}}
\newcommand{\rmJ}{\ensuremath{{\rm J}}}
\newtheorem{thm}{\indent \bf Theorem}
\newtheorem{prop}{\indent \bf Proposition}
\newtheorem{cor}{\indent \bf Corollary}
\newtheorem{lemma}{\indent \bf Lemma}
\newtheorem{defn}{\indent \bf Definition}
\newtheorem*{remark}{\indent \bf Remark}
\newtheorem*{remarks}{\indent \bf Remarks}
\newcommand{\ignore}[1]{\relax}
\title[Kohnen's limit process]{Kohnen's limit process for \\ real-analytic Siegel modular forms}
\author{Kathrin Bringmann}
\address{Mathematisches Institut, Universit\"at K\"oln\\ Weyertal 86-90, D-50931 K\"oln\\Germany}
\email{kbringma@math.uni-koeln.de} 
\author{Martin Raum}
\address{Max Planck Institut f\"ur Mathematik\\Vivatsgasse 7\\53111 Bonn, Germany}
\email{MRaum@mpim-bonn.mpg.de} 
\author{Olav K. Richter}
\address{Department of Mathematics\\University of North Texas\\ Denton, TX 76203\\USA}
\email{richter@unt.edu}
\thanks{The first author was partially supported by the Alfried Krupp Prize for Young University Teachers of the Krupp Foundation and by NSF grant DMS-$0757907$. The second author holds a scholarship from the Max Planck society. The third author was partially supported by Simons Foundation Grant $\#200765$}
\subjclass[2010]{Primary 11F46; Secondary 11F50, 11F60}
\begin{document}
\begin{abstract}
Kohnen introduced a limit process for Siegel modular forms that produces Jacobi forms.  He asked if there is a space of real-analytic Siegel modular forms such that skew-holomorphic Jacobi forms arise via this limit process.  In this paper, we initiate the study of harmonic skew-Maass-Jacobi forms and harmonic Siegel-Maass forms.  We improve a result of Maass on the Fourier coefficients of harmonic Siegel-Maass forms, which allows us to establish a connection to harmonic skew-Maass-Jacobi forms.  In particular, we answer Kohnen's question in the affirmative.
\end{abstract}

\maketitle


\section{Introduction}

Jacobi forms occur in the Fourier expansion of Siegel modular forms of degree $2$, 
a fact that played an important part in the proof of the Saito-Kurokawa conjecture 
(see Maass \cite{Maass-Spezial-I, Maass-Spezial-II, Maass-Spezial-III}, 
Andrianov \cite{A-Saito-Kurokawa}, Zagier \cite{Z-SK}, and Eichler and Zagier \cite{EZ}).  
The theory of Jacobi forms has grown enormously since then leading to beautiful applications 
in many areas of mathematics and physics.  Several of these applications rely on real-analytic 
Jacobi forms, and it has been necessary to investigate such forms in detail (see Skoruppa \cite{Sko-mpim, Sko-Invent90}, Berndt and Schmidt \cite{BS}, 
Pitale \cite{Ameya}, the first and third author \cite{B-R-Maass-Jacobi}, and more recently \cite{BRR-H-Harmonic}).  For instance, the real-analytic Jacobi forms in Zwegers \cite{Zwe-thesis} are examples of harmonic Maass-Jacobi forms \cite{BRR-H-Harmonic} that are absolutely vital to the theory of mock theta functions.  
These real-analytic Jacobi forms also impact the theory of Donaldson invariants of $\C\PP^2$ that are related to gauge theory 
(see for example G{\"o}ttsche and Zagier \cite{GZ-Selecta98},  G{\"o}ttsche, Nakajima, and Yoshioka \cite{GNY-DiffGeom08}, 
and Malmendier and Ono \cite{Ono-Mal}), and they emerge in recent work on the Mathieu moonshine (see for example Eguchi and Ooguri and Tachikawa \cite{Egu-Oog-Tac}).

The interplay of holomorphic Jacobi forms and holomorphic Siegel modular forms is well understood, but the analogous situation for real-analytic forms is still mysterious and only partial progress has been made.  For example, current work of Dabholkar, Murthy, and Zagier \cite{DMZ} on quantum black holes and mock modular forms features mock Jacobi forms (which can also be viewed as holomorphic parts of harmonic Maass-Jacobi forms \cite{BRR-H-Harmonic,B-R-Maass-Jacobi}) that occur as Fourier coefficients of meromorphic Siegel modular forms.  Kohnen \cite{Ko-problems, Ko-poincare} suggests a completely different approach to connect real-analytic Jacobi forms and Siegel modular forms.  We use Kohnen's approach to shed more light on 
the relation of Jacobi forms and Siegel modular forms in the real-analytic world.  Let $F$ be a real-analytic Siegel modular form of degree $2$ 
with Fourier-Jacobi expansion 
\begin{gather}
\label{eq:fourierjacobiexpansion}
F(Z)=\sum_{m\in\Z} \phi_m(\tau,z,y')\, e^{2 \pi i\, mx'}
\text{,}
\end{gather}
where throughout the paper, $Z=\left(\begin{smallmatrix} \tau & z \\ z & \tau'\end{smallmatrix}\right)\in\HH_2$ (the Siegel upper half space of degree $2$) with $\tau=x+iy$, $z=u+iv$, and $\tau'=x'+iy'$.  In general, $\phi_m$ is not a Jacobi form due to the dependence on $y'$.  However, in the special case that $F$ in (\ref{eq:fourierjacobiexpansion}) is Maass' \cite{Maass-diff} nonholomorphic Siegel Eisenstein series of degree $2$ and of type $(\frac12, k-\frac12)$, Kohnen \cite{Ko-problems, Ko-poincare} employs the limit
\begin{equation}
\label{Kohnen's limit}
\mathcal{L}(\phi_m):=
\underset{\delta\ra\infty}{\lim}\,e^{\frac{\delta}{2}\,}e^{2\pi m\frac{v^2}{y}}\phi_m\left(\tau,z,\frac{\delta}{4\pi m}+\frac{v^2}{y}\right)\hspace{5ex} (m>0)
\end{equation}
to produce skew-holomorphic Jacobi forms of weight $k$ and index $m$.  Naturally, he asks if there is a space of real-analytic Siegel modular forms such that the limit (\ref{Kohnen's limit}) always yields skew-holomorphic Jacobi forms.  Note also that if $F$ is a holomorphic Siegel modular form of weight $k$, then (\ref{Kohnen's limit}) 
gives precisely the $m$-th Fourier-Jacobi coefficient of $F$, i.e., a holomorphic Jacobi form of weight $k$ and index $m$.

In this paper, we consider the space $\widehat{\mathbb{M}}_k$ of harmonic Siegel-Maass forms of weight $k$ (see Definition \ref{HSM}), which are real-analytic Siegel modular forms of degree $2$ and of type $(\frac12, k-\frac12)$ that are annihilated by the matrix-valued Laplace operator $\Omega_{\frac{1}{2}, k-\frac{1}{2}}$ (defined in (\ref{Laplace})).  
Recall that in the degree one case, Bruinier's and Funke's \cite{B-F-Duke04} operator $\xi_k$ maps harmonic weak Maass forms of weight $k$ to weakly-holomorphic modular forms of weight $2-k$, and the kernel of the map $\xi_k$ consists of weakly-holomorphic modular forms of weight $k$.  In (\ref{Siegel-xi}) we define the corresponding operator $\xi_{\frac{1}{2}, k-\frac{1}{2}}^{(2)}$ for Siegel-Maass forms, which provides a duality between the weights $k$ and $3-k$ (analogous to the situation of the Jacobi forms in Section \ref{sec:skew-Jacobi} and in \cite{B-R-Maass-Jacobi}), and forms in the kernel are analogs of ``holomorphic'' Siegel-Maass forms.  In Section \ref{sec:skew-Jacobi}, we introduce the space $\widehat{\mathbb{J}}_{k,m}^{\sk}$ of harmonic skew-Maass-Jacobi forms of weight $k$ and index $m$ (see Definition \ref{sMJ}), which contains the space $\rmJ_{k,m}^{\sk}$ of skew-holomorphic Jacobi forms of weight $k$ and index $m$.  We use Kohnen's limit process to prove the following theorem, which connects $\widehat{\mathbb{J}}_{k,m}^{\sk}$ and $\widehat{\mathbb{M}}_k$, and in particular, answers Kohnen's question.  Throughout this paper we assume that $k$ is an odd integer such that $k\ne 1,3$.

\vspace{1ex}

\begin{thm}
\label{main}
Let $F \in \widehat{\mathbb{M}}_k$ with Fourier-Jacobi expansion as in (\ref{eq:fourierjacobiexpansion}), and if $k>3$ assume that $\xi_{\frac{1}{2}, k-\frac{1}{2}}^{(2)}(F)=0$.  Let $m > 0$.  If $k>3$, then $y^{\frac{1}{2} - k\,} \mathcal{L}\bigl(\det Y^{k - \frac{1}{2}\,} \phi_m\bigr)\in \rmJ_{k,m}^{\sk}$, and if $k<0$, then $y^{\frac{1}{2} - k\,} \mathcal{L}\bigl(\det Y^{k - \frac{1}{2}\,} \phi_m\bigr)\in\widehat{\mathbb{J}}_{k,m}^{\sk}$.
\end{thm}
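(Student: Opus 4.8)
The plan is to read off the defining data of the limit function --- its transformation behaviour, its harmonicity, and its Fourier support --- directly from the corresponding data of $F$, passing from degree $2$ to index $m$ through Kohnen's substitution. The two normalisations play distinct roles and should be kept in view throughout: multiplication by $\det Y^{k-\frac12}$ converts the type $(\frac12,k-\frac12)$ automorphy factor of $F$ into a weight-$k$ factor, while the outer $y^{\frac12-k}$, applied after the limit, installs the correct skew-Jacobi weight in the $\tau$-variable.

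First I would restrict the modular transformations of $F$ to the Jacobi group embedded in the degree-$2$ Siegel modular group and strip off the factor $e^{2\pi i m x'}$ in (\ref{eq:fourierjacobiexpansion}); this equips each $\phi_m(\tau,z,y')$ with a real-analytic Jacobi transformation law. I would then check that Kohnen's substitution $y'=\frac{\delta}{4\pi m}+\frac{v^2}{y}$ together with the normalising factor $e^{\delta/2}e^{2\pi m v^2/y}$ is compatible with these transformations in the limit, so that $y^{\frac12-k}\mathcal{L}(\det Y^{k-\frac12}\phi_m)$ satisfies precisely the transformation law defining $\rmJ_{k,m}^{\sk}$, respectively $\widehat{\mathbb{J}}_{k,m}^{\sk}$ (Definition \ref{sMJ}).

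The heart of the argument, and the step I expect to be the main obstacle, is converting the single matrix-valued equation $\Omega_{\frac12,k-\frac12}F=0$ from (\ref{Laplace}) into the Casimir-type condition defining harmonic skew-Maass-Jacobi forms. I would expand $\Omega_{\frac12,k-\frac12}$ in the coordinates $(\tau,z,\tau')$, apply it to $\det Y^{k-\frac12}\phi_m$, and track which terms survive $\mathcal{L}$: the contributions carrying $x'$-derivatives or positive powers of $y'$ should either vanish or reassemble into exactly the skew-Jacobi Casimir operator acting on the limit. The difficulty is that this degeneration of the differential operator must be controlled simultaneously with the degeneration of the special functions appearing in the coefficients of $F$, since the surviving harmonicity statement depends on the precise balance between the two.

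For the Fourier expansions I would invoke the sharpening of Maass's coefficient formula established earlier in the paper: each coefficient of $F$ factors as an exponential in $X$ times a confluent hypergeometric function in a variable built from $\det Y$. Kohnen's limit is engineered so that, as $\delta\to\infty$, the normalising factors cancel the growth and isolate the leading asymptotics of this special function, leaving finite coefficients supported on indices with $4nm-r^2\le 0$ --- precisely the Fourier support of a skew-holomorphic, respectively harmonic skew-Maass-Jacobi, form; I would also confirm that the associated growth condition of Definition \ref{sMJ} survives. Finally I would use the hypothesis on $\xi_{\frac12,k-\frac12}^{(2)}$: when $k>3$, the assumption $\xi_{\frac12,k-\frac12}^{(2)}(F)=0$ places $F$ in the analog of the holomorphic subspace, collapsing the hypergeometric function to its terminating solution so that the limit lands in the smaller space $\rmJ_{k,m}^{\sk}$, whereas for $k<0$ no such collapse occurs and the second, nonholomorphic solution survives, producing a genuine element of $\widehat{\mathbb{J}}_{k,m}^{\sk}$. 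That the two dual weight ranges $k>3$ and $k<0$ behave as claimed, and that the limit converges at all, both rest on this explicit coefficient formula.
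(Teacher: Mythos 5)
Your outline gets the bookkeeping right (the two normalisations, the inherited transformation law, the role of Theorem \ref{thm:laplacesiegelfourierexpansion}), but the two steps you yourself identify as the core of the argument are exactly where it breaks down. First, the plan to expand $\Omega_{\frac{1}{2},k-\frac{1}{2}}$ in the coordinates $(\tau,z,\tau')$ and show that the terms surviving $\mathcal{L}$ ``reassemble into the skew-Jacobi Casimir operator'' is not viable: it requires interchanging the limit $\delta\rightarrow\infty$ with differentiation, hence uniform control of all derivatives of $\phi_m$, and there is no clean algebraic degeneration of the second-order matrix-valued operator $\Omega_{\frac{1}{2},k-\frac{1}{2}}$ into the third-order scalar operator $\mathcal{C}_{k,m}^{\sk}$; the paper never attempts anything of this kind. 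Second, your description of the Fourier coefficients as ``an exponential in $X$ times a confluent hypergeometric function in a variable built from $\det Y$'' is accurate only for $\rank T\le 1$. For $\rank T=2$ --- the coefficients that carry the actual content of $\phi_m$ --- Theorem \ref{thm:laplacesiegelfourierexpansion} produces a recursively defined double series in $\ttu=\tr(YT)$ and $\ttv=\tr(YT)^2-4\det(YT)$ when $T>0$, and for indefinite $T$ no closed form of the occurring solution is written down at all; under Kohnen's substitution both $\ttu$ and $\ttv$ blow up, so ``isolating the leading asymptotics'' of these coefficients is precisely the analytic problem one cannot do directly, and the paper does not do it. In particular, even the existence of the limit $\mathcal{L}$ for the rank-$2$ part of $\phi_m$ is not established by your argument.

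The idea you are missing is the paper's rigidity-plus-comparison mechanism. Theorem \ref{thm:laplacesiegelfourierexpansion} is used not for asymptotics but for a dimension count: when $\rank T=2$ only \emph{one} fundamental solution can occur, so each such $a(Y,T)$ is forced to be a scalar multiple of the corresponding Fourier coefficient of the Poincar\'e-Eisenstein series $P_{k,0}$ ($k>3$) resp.\ $P_{k,\frac{3}{2}-k}$ ($k<0$) of (\ref{Poincare-Eisenstein series}), provided that coefficient is nonzero --- which is checked via Proposition \ref{xi of Poincare series} (its $\xi^{(2)}$-image is $E_{3-k}$, whose coefficients for $T>0$ do not vanish) and via the nonvanishing statement of Theorem \ref{thm:eisensteinlimit} for indefinite $T$. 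Since Kohnen's limit of those Eisenstein series is already known to lie in $\rmJ_{k,m}^{\sk}$ resp.\ $\widehat{\mathbb{J}}_{k,m}^{\sk}$ (Theorem \ref{thm:eisensteinlimit}, which rests on Kohnen's unfolding computation rather than on coefficient asymptotics), both the existence of the limit and the Casimir condition follow term by term, with no degeneration of $\Omega_{\frac{1}{2},k-\frac{1}{2}}$ ever needed. When $k>3$ and $T>0$, the hypothesis $\xi_{\frac{1}{2},k-\frac{1}{2}}^{(2)}(F)=0$ kills $a(Y,T)$ outright (not merely ``collapses it to a terminating solution''). Only the $\rank T\le 1$ terms are handled by explicit asymptotics (incomplete Gamma and $W$-Whittaker functions, giving (\ref{rank 1 limit})), where one verifies by hand annihilation by $\mathcal{C}_{k,m}^{\sk}$, and additionally by the heat operator when $k>3$. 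Finally, passing from one indefinite $T'$ to all indefinite $T$ requires the substitution $T=HT'\trans{H}$ and the invariance $a(\trans{H}YH,T')=a(Y,T)$ --- a step absent from your outline but necessary to get existence of the limit for the whole indefinite part of the expansion.
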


\vspace{1ex}


\vspace{1ex}

The paper is organized as follows.  In Section 2, we review differential operators for Siegel modular forms and we sharpen a result by Maass on the Fourier expansions 
of Siegel modular forms that are annihilated by $\Omega_{\frac{1}{2}, k-\frac{1}{2}}$.  In Section 3, we discuss harmonic skew-Maass-Jacobi forms.  In Section 4, we explore Kohnen's limit process and we prove Theorem~\ref{main}.

\section{Differential operators for Siegel modular forms}
\label{sec:differentialoperators}

Maass \cite{Maass-diff} (see also \cite{Maass}) introduces differential operators for Siegel modular forms of degree $n$.  In this paper, we focus on real-analytic Siegel modular forms of degree $2$ and we only review the relevant results of Maass.  Let us start with some standard notation.  Let $\Mat{2}{\C}$ be the set of $2 \times 2$ matrices with entries in $\C$
and let $I_2\in\Mat{2}{\C}$ be the identity matrix. If $A\in\Mat{2}{\C}$, then $\tr(A)$ denotes the trace of $A$.  Moreover, let $\mbox{Sp}_{2}(\R)$ be the symplectic group of degree $2$ and let $Z=\left(\begin{smallmatrix} \tau & z \\ z & \tau'\end{smallmatrix}\right)=X+iY\in\HH_2$ be a typical variable.  As usual, if $M=\left(\begin{smallmatrix}A & B\\ C & D\end{smallmatrix}\right)\in\mbox{Sp}_{2}(\R)$ and $Z\in\HH_2$, then we set
$$
M\circ Z:=(AZ+B)(CZ+D)^{-1}.
$$
\noindent
Furthermore, for functions $G:\HH_2\rightarrow\C$ and for fixed $\alpha$, $\beta\in\C$ such that $\alpha-\beta\in\Z$, 
we define the slash operator 
\begin{equation}
\label{slash}
\left(G\,\big|_{(\alpha,\,\beta)}\,M\right)(Z):=\det(CZ+D)^{-\alpha}\det(C\overline{Z}+D)^{-\beta}\,G(M\circ Z)
\end{equation}
\noindent
for all $M=\left(\begin{smallmatrix}A & B\\ C & D\end{smallmatrix}\right)\in\mbox{Sp}_{2}(\R)$. 

\subsection{Casimir operators}

It is well known that the center of the universal enveloping algebra of $\mbox{Sp}_{2}(\R)$ is generated by $2$ elements, the {\it Casimir elements}.  
Their images under the action in (\ref{slash}) yield a quadratic and a quartic Casimir operator, which generate the $\C$-algebra of invariant differential operators with respect 
to the action in (\ref{slash}).  Maass (see $\S 8$ in \cite{Maass}) determines this algebra and we now introduce some more notation to give the explicit formulas of these invariant differential operators in Maass \cite{Maass}.  Let
\begin{gather*}
  \partial_Z
:=
  \left(\begin{matrix} \partial_{\tau} & \frac{1}{2}\partial_z \\
                       \frac{1}{2}\partial_z & \partial_{\tau'}
  \end{matrix}\right)
\quad \text{and} \quad
  \partial_{\overline{Z}}
:=
  \left(\begin{matrix} \partial_{\overline{\tau}} & \frac{1}{2}\partial_{\overline{z}} \\
                       \frac{1}{2}\partial_{\overline{z}} & \partial_{\overline{\tau'}}
  \end{matrix}\right)
\text{,}
\end{gather*}
where $\partial_{w}:=\frac{\partial}{\partial w}=\frac12\left(\frac{\partial}{\partial a}-i\frac{\partial}{\partial b}\right)$ and $\partial_{\overline{w}}:=\frac{\partial}{\partial \overline{w}}=\frac12\left(\frac{\partial}{\partial a}+i\frac{\partial}{\partial b}\right)$ for any complex variable $w=a+ib$.  Set
\begin{gather*}
  K_{\alpha}
:=
  \alpha I_2+(Z-\overline{Z})\partial_Z
\text{,} \qquad
  \Lambda_{\beta}
:=
  -\beta I_2+(Z-\overline{Z})\partial_{\overline{Z}}
\text{,}
\end{gather*}
and
\begin{align}
\label{Laplace}
  \Omega_{\alpha, \beta} 
&:=
   \Lambda_{\beta - \tfrac{3}{2}} K_{\alpha} + \alpha (\beta - \tfrac{3}{2})I_2
\\
\nonumber
&\hphantom{:}=
  -4 Y \trans (Y \partial_{\overline{Z}}) \partial_Z - 2 i \beta Y \partial_Z + 2 i \alpha Y \partial_{\overline Z}
\text{.}
\end{align}
Finally, if $A_{\alpha,\beta}^{(1)} := \Omega_{\alpha, \beta} - \alpha (\beta - \frac{3}{2} ) I_2$, then
\begin{align*}
  H_1^{(\alpha,\beta)}
&:= \tr\bigl(A_{\alpha,\beta}^{(1)}\bigr)
\qquad \text{and}
\\
  H_2^{(\alpha,\beta)}
&:=
     \tr\bigl(A_{\alpha,\beta}^{(1)}\, A_{\alpha,\beta}^{(1)}\bigr)
  - \tr\bigl(\Lambda_{\beta}\, A_{\alpha,\beta}^{(1)}\bigr)
  + \tfrac{1}{2} \tr\bigl(\Lambda_{\beta}\bigr) \tr\bigl(A_{\alpha,\beta}^{(1)}\bigr)
\end{align*}
are two Casimir operators that generate the $\C$-algebra of invariant differential operators with respect 
to the action in (\ref{slash}).
%
%

\vspace{1ex}

\begin{remark}
Nakajima \cite{Naka-Japan82} (apparently unaware of the Theorem on p. 116 of \cite{Maass}) gives the two invariant differential operators with respect to the action in (\ref{slash}) when $\alpha=\beta=0$.  The quadratic operator in \cite{Naka-Japan82} coincides (up to a constant factor) with $H_1^{(0,0)}$, but unfortunately, the quartic operator in \cite{Naka-Japan82} is incorrect.  This was verified with Singular and Plural~\cite{singular, plural} and the computer code is posted on the author's homepages.  Note also that the Fourier series expansions of Siegel-Maass forms in Niwa \cite{Niwa-Nagoya} rely on the differential operators of \cite{Naka-Japan82}. 
\end{remark}

\vspace{1ex}

Of particular interest is the operator
\begin{equation}
\label{factoring}
\mathcal{C}^{(\alpha, \beta)} := N_{\beta - 1} M_{\alpha}
\text{,}
\end{equation}
where $M_{\alpha}$ and $N_{\beta}$ are the raising and lowering operators, respectively, of Maass \cite{Maass-diff}.  Recall that
\begin{align*}
M_{\alpha}:=\alpha (\alpha - \tfrac{1}{2}) + (\alpha - \tfrac{1}{2})
    \bigl((\tau - \overline{\tau}) \partial_\tau+
          (z - \overline{z}) \partial_z+
          (\tau' - \overline{\tau'}) \partial_{\tau'}\bigr)+ \det (Z - \overline{Z}) (\partial_\tau \partial_{\tau'} - \tfrac{1}{4} \partial_z^2)
\end{align*}
and $N_{\beta} := \mathfrak{i} M_{\beta} \mathfrak{i}$ with $\mathfrak{i}(G)(Z):=G(-\overline{Z})$ for any $G:\HH_2\rightarrow\C$.  A direct computation shows that
\begin{gather*}
  \mathcal{C}^{(\alpha, \beta)}
=
     H_2^{(\alpha, \beta)}
   + \bigl(H_1^{(\alpha, \beta)}\bigr)^2
   + \tfrac{1}{2} ( 1 + \alpha - \beta) H_1^{(\alpha, \beta)}
\text{.}
\end{gather*}

Theorem~\ref{main} connects skew-holomorphic Jacobi forms and more generally, harmonic skew-Maass-Jacobi forms (see Section \ref{sec:skew-Jacobi}) with real-analytic Siegel modular forms of type $(\frac{1}{2}, k-\frac{1}{2})$.  
Therefore, we focus on the case $\alpha=\frac{1}{2}$ and in this case we write $H_1:=H_1^{(\frac{1}{2},k-\frac{1}{2})}$, $H_2:=H_2^{(\frac{1}{2},k-\frac{1}{2})}$, 
and $\mathcal{C} := \mathcal{C}^{(\frac{1}{2}, k - \frac{1}{2})}$.  Note that $\mathcal{C}=\xi_{3-k,0}^{(2)}\,\xi_{\frac{1}{2},k-\frac{1}{2}}^{(2)}$, where
\begin{gather}
\label{Siegel-xi}
\xi_{k, 0}^{(2)}:=\det(Y)^{k-\frac{3}{2}}\,N_0\quad \text{and} \quad \xi_{\frac{1}{2}, k-\frac{1}{2}}^{(2)}:=\det(Y)^{k-\frac{3}{2}}\,M_\frac{1}{2}
\end{gather}
are higher dimensional generalizations of Bruinier's and Funke's \cite{B-F-Duke04} operator $\xi_k$.  

In the spirit of Borel \cite{Borel}, it is natural to consider real-analytic Siegel modular forms that are eigenfunctions of the two Casimir operators $H_1$ and $H_2$.  
Without further restrictions it seems quite hopeless to explicitly describe the Fourier expansions of such forms.  Nevertheless, we can use the Laplace operator $\Omega_{\frac{1}{2}, k-\frac{1}{2}}$ in (\ref{Laplace}) to define a subspace of such real-analytic Siegel modular forms, which allows us to explicitly determine their Fourier coefficients.

\ignore{
\subsection{Laplace operator}
\label{ssec:laplaceoperator}
Maass \cite{Maass-diff} investigates properties of the matrix-valued Laplace operator $\Omega_{\alpha, \beta}$ in (\ref{Laplace}) and he proves the following commutation relation:
\begin{equation*}
\begin{split}
& \Omega_{\alpha, \beta} (G|_{(\alpha,\beta)} M) (Z)\\
& =\det(C Z + D)^{-\alpha} \det(C \overline{Z} + D)^{-\beta}\trans (C Z + D)^{-1} (\Omega_{\alpha,\beta} G) (M \circ Z) \trans (C Z + D)\\
   \end{split}
\end{equation*}
for any $G:\HH_2\rightarrow\C$ and $M=\left(\begin{smallmatrix}* & *\\ C & D\end{smallmatrix}\right)\in\mbox{Sp}_{2}(\R)$.  The transformation on the right hand side is an action of $\Sp{2} (\R)$ on $\Mat{2}{\C}$-valued functions.  We show that the Laplace operator $\Omega_{\alpha, \beta}$ is covariant from $|_{(\alpha, \beta)}$ to a slash action attached to a noncommutative cocycle.  This cocycle can be expressed in terms of the following extension of the action $|_{(\alpha, \beta)}$:  For functions $f : \HH_2 \rightarrow \MatT{2}\C$ and for fixed $\alpha$, $\beta\in\C$ such that $\alpha-\beta\in\Z$, we define the vector-valued slash operator:
\begin{gather*}
f |_{(\alpha, \beta; 2)} M := \trans (C Z + D)^{-1} ( f |_{(\alpha, \beta)} M ) (C Z + D)^{-1}
\text{.}
\end{gather*}
Note that if $\beta=0$, then this is a special case of the standard slash action for vector-valued Siegel modular forms (see for example Freitag \cite{Frei_1} or van der Geer \cite{Geer-1-2-3}).

We need to understand the action of the cocycle restricted to the stabilizer $\U{2} (\C)$ of $i I_2 \in \HH_2$.  The next proposition completely explains the noncommutative part of the cocycle attached to the co-domain of $\Omega_{\alpha, \beta}$.

\vspace{1ex}

\begin{prop}
\label{prop-representation}
The representation $(m, g) \mapsto \trans g^{-1} m \trans g$ of $\U{2}(\C)$ on $\Mat{2}{\C}$ is a direct sum of the one dimensional irreducible representation and the three dimensional irreducible representation of $\SU{2}(\C)$.  The center of $\U{2}(\C)$ acts trivially.
\end{prop}

\begin{proof}
The first statement follows by analyzing the action of diagonal matrices in $\U{2}(\C)$ and the second statement is obvious.
\end{proof}

\vspace{1ex}

The action of $\U{2}(\C)$ extends to a cocycle for $\GL{2}(\C)$.  Hence Proposition \ref{prop-representation} may be rephrased as follows: By decomposing the associated representation of $\U{2}(\C)$ we find that $\Omega_{\alpha, \beta}$ is covariant from $|_{(\alpha, \beta)}$ to an action, which is representation theoretic equivalent to a direct sum of $|_{(\alpha, \beta)}$ and $|_{(\alpha - 1, \beta; 2)}$.  Note that the component of $\Omega_{\alpha, \beta}$ corresponding to a scalar slash action is given by its trace.  We end this subsection with a result that is essentially due to Maass \cite{Maass-diff}.

\vspace{1ex}

\begin{prop}[\cite{Maass-diff}]
\label{prop:omegavanishing}
If $G : \HH_2 \rightarrow \C$ satisfies $\Omega_{\alpha, \beta}(G) = 0$, then $G$ is annihilated by all invariant differential operators with no constant term for the slash action $|_{(\alpha, \beta)}$.
\end{prop}
\begin{proof}
We outline a proof based on representation theory, which is different from Maass' argument in \cite{Maass-diff}.

Note that $\tr(\Omega_{\alpha, \beta})$ is an invariant differential operator of order $2$.  Hence it suffices to prove that $G$ is an eigenfunction of an invariant differential operator of order $4$ that is not of the form $c_1 \bigl(H_1^{(\alpha, \beta)}\bigr)^2 + c_2 H_1^{(\alpha, \beta)}$ for some $c_1, c_2\in\C$.  Helgason's \cite{Helg77, Helg92} treatment of covariant differential operators shows that the $|_{(\alpha-1, \beta; 2)}$-component of $\Omega_{\alpha,\beta}$ composed with an appropriate covariant differential operator yields an invariant differential operator of order $4$.  Any function vanishing under $\Omega_{\alpha, \beta}$ also vanishes under this operator.  Finally, that component annihilates $\det Y^s$ for any $s \in \C$ and $H_1^{(\alpha, \beta)} \left(\det Y^s\right) = (3-2\beta-2s)(\alpha+s) \det Y^s$, which yields the claim.
\end{proof}
}

\subsection{Siegel-Maass forms}
Imamo\=glu and the third author \cite{I-R-RIMS10} consider Siegel-Maass forms of degree $n$ that are annihilated by the Maass operator $M_{\frac{n-1}{2}}$.  
If $n=2$, then such forms are in the kernel of $\xi_{\frac{1}{2}, k-\frac{1}{2}}^{(2)}$ in (\ref{Siegel-xi}) and hence (in light of the degree $1$ case in \cite{B-F-Duke04}), 
they are analogs of ``holomorphic'' Siegel-Maass forms.  However, there are also Siegel-Maass forms that play a key role in the proof of Theorem~\ref{main} 
(see Theorem~\ref{thm:eisensteinlimit}), and which are not in the kernel of $\xi_{\frac{1}{2}, k-\frac{1}{2}}^{(2)}$ (see Proposition \ref{xi of Poincare series}).  
Our following definition includes such examples of Siegel-Maass forms.

\vspace{1ex}

\begin{defn}
\label{HSM}
A harmonic Siegel-Maass form of weight $k$ on $\Gamma:=\Sp{2}(\Z)$ is a real-analytic $F:\HH_2\rightarrow \C$ satisfying the following conditions:

\vspace{1ex}

(1) For all $M\in\Gamma$, $F\,|_{(\frac{1}{2},\,k-\frac{1}{2})}\,M=F$.

\vspace{1ex}

(2) We have that $\Omega_{\frac{1}{2}, k-\frac{1}{2}}(F)=0$.

\vspace{1ex}

(3) We have that $|F(Z)|\leq C \, \tr (Y)^N$ for some $C>0$ and $N\in\N$. 

\vspace{1ex}

Let $\widehat{\mathbb{M}}_k$ denote the space of such harmonic Siegel-Maass forms of weight $k$.
\end{defn}


\newpage

\begin{remarks}
\ 
\begin{enumerate}
\item
Maass \cite{Maass-diff} essentially shows that if $G : \HH_2 \rightarrow \C$ satisfies $\Omega_{\alpha, \beta}(G) = 0$, then $G$ is annihilated by all invariant differential operators with no constant term for the slash action $|_{(\alpha, \beta)}$ (for a representation theoretic proof of this fact see \cite{Raum-thesis}).  Hence $\widehat{\mathbb{M}}_k$ is a subspace of the space of real-analytic Siegel modular forms that are eigenfunctions of both Casimir operators $H_1$ and $H_2$.

\vspace{1ex}

\item
One cannot use the Koecher principle to remove condition (3), since there are harmonic Siegel-Maass forms that have singularities at the Satake boundary of the Siegel upper half space (see Proposition \ref{Poincare series is harmonic} in the case that $k<0$).

\vspace{1ex}

\item
Note that holomorphic Siegel modular forms of weight $k$ are annihilated by the matrix-valued Laplace operator $\Omega_{k,0}$.

\item
The space $\widehat{\mathbb{M}}_k$ is invariant under the action of the Hecke operators (for details on Hecke operators, 
see Chapter IV of \cite{Frei_1}): If $F\in\widehat{\mathbb{M}}_k$ and $T$ is a Hecke operator, then the definition of the Hecke operator implies that $F\,| \,T$ satisfies (1) and (3) of Definition \ref{HSM}.  The covariance property of $\Omega_{\frac{1}{2},k-\frac{1}{2}}$ (see $\S8$ of \cite{Maass}) shows that $F\,| \,T$ satisfies (2) of Definition \ref{HSM}.
\end{enumerate}
\end{remarks}

Examples of harmonic Siegel-Maass forms can be constructed via the Poincaré-Eisenstein series
\begin{equation}
\label{Poincare-Eisenstein series}
P_{k,s}(Z):=\sum_{M \in \Gamma_{\infty} \backslash \Gamma} \left((\det Y)^s |_{(\frac{1}{2},\,k -\frac{1}{2})} \,M\right)(Z),
\end{equation}
where $\Gamma_{\infty}:=\left\{\left(\begin{smallmatrix} A & B \\ C & D\end{smallmatrix}\right)\in\Gamma\,|\,C=0\right\}$.

\vspace{1ex}

\begin{remark}
Note that $P_{k,s}=(\det Y)^sE_{s+\frac{1}{2},s+k-\frac{1}{2}}$, where $E_{\alpha,\beta}$ is Maass' \cite{Maass-diff, Maass} nonholomorphic Eisenstein series.  We find that $P_{k,s}$ converges absolutely if $2 \mathrm{Re} (s)+k>3$.  Maass \cite{Maass-diff} also showed that $E_{\alpha,\beta}$ is in the kernel of $\Omega_{\alpha,\beta}$ (provided that $\alpha-\beta\in 2\Z$ and $\mathrm{Re} (\alpha+\beta)>3$). 
\end{remark}

\vspace{1ex}

\begin{prop}
\label{Poincare series is harmonic}
If $s=0$ ($k>3$) or $s=\frac{3}{2}-k$ ($k<0$), then $P_{k,s}\in\widehat{\mathbb{M}}_k$.
\end{prop}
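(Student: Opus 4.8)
The plan is to verify, for $F = P_{k,s}$, the three defining conditions of Definition~\ref{HSM}, exploiting throughout the identity $P_{k,s} = (\det Y)^s\, E_{\alpha,\beta}$ from the remark above, where I abbreviate $\alpha := s + \tfrac12$ and $\beta := s + k - \tfrac12$. The first observation is that both prescribed values of $s$ lie in the range of absolute convergence $2\,\mathrm{Re}(s) + k > 3$: for $s = 0$ this reads $k > 3$, and for $s = \tfrac32 - k$ it reads $3 - k > 3$, which holds because $k < 0$. Granting absolute and locally uniform convergence, condition (1) is just the defining modularity of a Poincaré–Eisenstein series: averaging the $\Gamma_\infty$-invariant seed $(\det Y)^s$ over $\Gamma_\infty \backslash \Gamma$ produces a function that is invariant under $|_{(\frac{1}{2},\,k-\frac{1}{2})}$ for all of $\Gamma$ (equivalently, this is the modularity of Maass' $E_{\alpha,\beta}$ recorded in the remark). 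Condition (3) follows from the same absolute convergence together with the elementary bound $(\det Y)^s \le (\tfrac12\,\tr Y)^{2s}$, which forces the polynomial growth $|P_{k,s}(Z)| \le C\,\tr(Y)^N$.

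The crux is condition (2), and here I would first isolate a covariance lemma for the factors of $\Omega$. Writing $W := Z - \overline Z$ and using that $\partial_Z$ applied to a $2\times 2$ determinant returns its adjugate, a direct computation gives $\partial_Z (\det W)^s = s\,(\det W)^s\, W^{-1}$ and, by the symmetric computation, $\partial_{\overline Z}(\det W)^s = -\,s\,(\det W)^s\, W^{-1}$. Since $\det Y$ is a fixed nonzero multiple of $\det W$, multiplying on the left by $W$ and applying the Leibniz rule turns these into the operator identities
\begin{gather*}
K_\alpha \circ (\det Y)^s = (\det Y)^s \circ K_{\alpha+s}, \qquad \Lambda_\beta \circ (\det Y)^s = (\det Y)^s \circ \Lambda_{\beta+s}.
\end{gather*}
Feeding these through the factorization $\Omega_{\frac12,\,k-\frac12} = \Lambda_{k-2} K_{\frac12} + \tfrac12(k-2) I_2$ from (\ref{Laplace}) and collecting the resulting scalar terms, I expect the clean shift formula
\begin{gather*}
\Omega_{\frac12,\,k-\frac12}\bigl((\det Y)^s\, G\bigr) = (\det Y)^s\, \Omega_{\alpha,\beta}(G) \;-\; s\bigl(s + k - \tfrac32\bigr)\,(\det Y)^s\, G
\end{gather*}
for every scalar-valued $G$.

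It then remains to apply the shift formula to $G = E_{\alpha,\beta}$. The kernel statement quoted in the remark gives $E_{\alpha,\beta} \in \ker\Omega_{\alpha,\beta}$ as soon as $\alpha - \beta \in 2\ZZ$ and $\mathrm{Re}(\alpha+\beta) > 3$; here $\alpha - \beta = 1 - k$ is even because $k$ is odd, and $\mathrm{Re}(\alpha+\beta) = 2\,\mathrm{Re}(s) + k > 3$ as already checked. Hence the first term on the right vanishes and the formula collapses to $\Omega_{\frac12,\,k-\frac12}(P_{k,s}) = -\,s\bigl(s + k - \tfrac32\bigr)\,P_{k,s}$, which is zero precisely when $s = 0$ or $s = \tfrac32 - k$ — exactly the two cases of the statement. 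I expect the only delicate point to be the second covariance identity: because $K_{\frac12}$ converts the scalar $G$ into a matrix-valued function before $\Lambda_{k-2}$ acts on it, one must check that the $\Lambda$-identity survives for matrix-valued arguments, respecting the matrix-multiplication conventions implicit in $(Z - \overline Z)\partial_{\overline Z}$ in Maass' formalism; the scalar computation above is the model, and the cancellation $W\cdot(-s(\det W)^s W^{-1}) = -s(\det W)^s I_2$ is what makes it go through.
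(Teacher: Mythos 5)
Your proof is correct, and it reaches the crucial condition (2) by a genuinely different route than the paper. The paper's own proof is a two-liner: it computes
\begin{equation*}
\Omega_{\frac{1}{2},k-\frac{1}{2}}\bigl((\det Y)^s\bigr)=-s\bigl(s-(\tfrac{3}{2}-k)\bigr)(\det Y)^s I_2
\end{equation*}
--- which is exactly your shift formula in the special case $G=1$, with the same eigenvalue $-s(s+k-\tfrac{3}{2})$ --- and then invokes the covariance of $\Omega_{\frac{1}{2},k-\frac{1}{2}}$ under the slash action $|_{(\frac{1}{2},k-\frac{1}{2})}$ to propagate the annihilation of the seed term by term through the sum over $\Gamma_\infty\backslash\Gamma$; condition (3) is deferred to the known growth of $E_{s+\frac{1}{2},s+k-\frac{1}{2}}$. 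You instead exploit the closed-form factorization $P_{k,s}=(\det Y)^s E_{\alpha,\beta}$, prove the conjugation identity $\Omega_{\frac{1}{2},k-\frac{1}{2}}\circ(\det Y)^s=(\det Y)^s\circ\Omega_{\alpha,\beta}-s(s+k-\tfrac{3}{2})(\det Y)^s$, and then quote Maass's kernel theorem for $E_{\alpha,\beta}$ from the remark; your verification of its hypotheses ($\alpha-\beta=1-k\in 2\ZZ$ because $k$ is odd, and $\Re(\alpha+\beta)=2\,\Re(s)+k>3$) is exactly right, as is your identification of the one delicate point: the $\Lambda$-identity survives for matrix-valued arguments precisely because the cancellation $W\cdot\bigl(-s(\det W)^s W^{-1}\bigr)=-s(\det W)^s I_2$ occurs as a left multiplication, so $\Lambda_\beta\circ(\det Y)^s=(\det Y)^s\circ\Lambda_{\beta+s}$ holds verbatim on $\Mat{2}{\C}$-valued functions. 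What each approach buys: the paper's argument needs only the formal covariance of the matrix-valued operator (and so generalizes at once to Poincar\'e series with other seeds), while yours avoids covariance entirely, replacing it by an elementary explicit computation plus Maass's theorem that $E_{\alpha,\beta}\in\ker\Omega_{\alpha,\beta}$; as a bonus your identity shows $\Omega_{\frac{1}{2},k-\frac{1}{2}}(P_{k,s})=-s(s+k-\tfrac{3}{2})P_{k,s}I_2$ for \emph{every} $s$ in the range of convergence, not just the two special values. Two cosmetic remarks: the last term of your shift formula should carry a factor $I_2$, since both sides are matrix-valued; and for condition (3), absolute convergence plus the bound on the seed does not by itself give the polynomial bound --- one majorizes $|P_{k,s}|$ by a real-analytic Eisenstein series of positive weight-parameter and uses its standard polynomial growth --- but this is the same level of informality as the paper, which simply asserts (3) for $E_{s+\frac{1}{2},s+k-\frac{1}{2}}$.
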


\begin{proof}
A direct computation shows that
$$
\Omega_{\frac{1}{2},k-\frac{1}{2}}\left((\det Y)^s\right)=-s\left(s-\left(\tfrac{3}{2}-k\right)\right)(\det Y)^sI_2
$$
and the covariance of $\Omega_{\frac{1}{2},k-\frac{1}{2}}$ proves that $\Omega_{\frac{1}{2},k-\frac{1}{2}}(P_{k,s})=0$ for $s=0$ and $s=\frac{3}{2}-k$.  Finally, (3) of Definition \ref{HSM} is satisfied for $E_{s+\frac{1}{2},s+k-\frac{1}{2}}$ and hence also for $P_{k,s}$.
\end{proof}

\vspace{1ex}

\begin{remark}
Proposition \ref{Poincare series is harmonic} in combination with Remarks $(1)$ after Definition \ref{HSM} implies that $\mathcal{C} (P_{k,s}) = 0$ for $s = 0$ and $s = \tfrac{3}{2} - k$.  
It is easy to verify that $\mathcal{C} (P_{k,s}) = 0$ also for $s=-\frac{1}{2}$ ($k>4$) and $s=2-k$ ($k<1$).
\end{remark}

Our final result in this subsection gives the image of $P_{k,s}$ under $\xi_{\frac{1}{2}, k-\frac{1}{2}}^{(2)}$ if $s=0$ and $s=\tfrac{3}{2}-k$.  In particular, if $s=\tfrac{3}{2}-k$, then $P_{k,s}$ is not a Siegel-Maass form as in \cite{I-R-RIMS10}.

\vspace{1ex}

\begin{prop}
\label{xi of Poincare series}
If $s=0$, then $P_{k,s}$ is already annihilated by $\xi_{\frac{1}{2}, k-\frac{1}{2}}^{(2)}$.  If $s=\frac{3}{2}-k$, then
\begin{equation*}
\xi_{\frac{1}{2}, k-\frac{1}{2}}^{(2)}(P_{k,s}) =\left(k-\tfrac{3}{2}\right)\left(k-2\right)E_{3-k},
\end{equation*}
where $E_{3-k}$ is the usual holomorphic Siegel-Eisenstein series of weight $3-k$.
\end{prop}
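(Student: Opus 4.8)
The plan is to exploit the factorization $\xi_{\frac12,k-\frac12}^{(2)}=\det(Y)^{k-\frac32}M_{1/2}$ together with the covariance of Maass' raising operator, reducing everything to the action on the single seed function $(\det Y)^s$ and then reassembling the Poincar\'e--Eisenstein sum.

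First I would record the covariance of $\xi_{\frac12,k-\frac12}^{(2)}$ (essentially due to Maass \cite{Maass-diff}): for $G\colon\HH_2\to\C$ and $M=\left(\begin{smallmatrix}A&B\\ C&D\end{smallmatrix}\right)\in\Gamma$,
\[
\xi_{\frac12,k-\frac12}^{(2)}\bigl(G\big|_{(\frac12,k-\frac12)}M\bigr)=\bigl(\xi_{\frac12,k-\frac12}^{(2)}G\bigr)\big|_{(3-k,0)}M,
\]
so that $\xi_{\frac12,k-\frac12}^{(2)}$ sends type $(\frac12,k-\frac12)$ to the holomorphic type $(3-k,0)$; this is consistent with the factoring $\mathcal{C}=\xi_{3-k,0}^{(2)}\xi_{\frac12,k-\frac12}^{(2)}$. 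Since $P_{k,s}$ converges absolutely and locally uniformly in the relevant ranges ($k>3$ for $s=0$ and $k<0$ for $s=\frac32-k$, both satisfying $2\mathrm{Re}(s)+k>3$), and the series obtained by applying the second-order operator $M_{1/2}$ term by term still converges locally uniformly, the covariance lets me interchange $\xi_{\frac12,k-\frac12}^{(2)}$ with the sum over $\Gamma_\infty\backslash\Gamma$.

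Next I would compute the seed action. At $\alpha=\frac12$ the first two summands of $M_\alpha$ drop out, leaving $M_{1/2}=\det(Z-\overline Z)\bigl(\partial_\tau\partial_{\tau'}-\tfrac14\partial_z^2\bigr)$ with $\det(Z-\overline Z)=-4\det Y$. Writing $(\det Y)^s=(yy'-v^2)^s$ and using $\partial_\tau=-\tfrac{i}{2}\partial_y$, $\partial_{\tau'}=-\tfrac{i}{2}\partial_{y'}$, $\partial_z=-\tfrac{i}{2}\partial_v$, a direct computation (in which the $yy'$- and $v^2$-terms recombine into $\det Y$) gives
\[
M_{1/2}\bigl((\det Y)^s\bigr)=\tfrac{s(2s+1)}{2}(\det Y)^s,\qquad\text{hence}\qquad \xi_{\frac12,k-\frac12}^{(2)}\bigl((\det Y)^s\bigr)=\tfrac{s(2s+1)}{2}(\det Y)^{s+k-\frac32}.
\]
For $s=0$ the scalar vanishes, so interchanging with the sum yields $\xi_{\frac12,k-\frac12}^{(2)}(P_{k,0})=0$, which is the first claim. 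For $s=\frac32-k$ the exponent collapses to $0$ and the scalar becomes $\tfrac{(3/2-k)(4-2k)}{2}=(k-\tfrac32)(k-2)$, so $\xi_{\frac12,k-\frac12}^{(2)}\bigl((\det Y)^{\frac32-k}\bigr)=(k-\tfrac32)(k-2)$ is constant.

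Finally I would reassemble the series. By the covariance,
\[
\xi_{\frac12,k-\frac12}^{(2)}(P_{k,s})=(k-\tfrac32)(k-2)\sum_{M\in\Gamma_\infty\backslash\Gamma}1\big|_{(3-k,0)}M=(k-\tfrac32)(k-2)\sum_{M\in\Gamma_\infty\backslash\Gamma}\det(CZ+D)^{-(3-k)},
\]
and the last sum is by definition the holomorphic Siegel--Eisenstein series $E_{3-k}$, giving the stated identity. The main obstacle I anticipate is not the scalar computation but the careful justification of interchanging $\xi_{\frac12,k-\frac12}^{(2)}$ with the infinite sum: fixing the correct target slash $\big|_{(3-k,0)}$ from Maass' covariance and confirming that termwise application of $M_{1/2}$ is legitimate throughout the open region of absolute convergence. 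Once this is secured, both assertions follow from the eigenvalue $\tfrac{s(2s+1)}{2}$ evaluated at $s=0$ and $s=\frac32-k$.
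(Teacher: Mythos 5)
Your proposal is correct and follows essentially the same route as the paper: the paper's entire proof is the seed computation $M_{\frac{1}{2}}\bigl((\det Y)^s\bigr)=s(s+\tfrac{1}{2})(\det Y)^s$ (which agrees with your eigenvalue $\tfrac{s(2s+1)}{2}$), with the covariance of $\xi_{\frac{1}{2},k-\frac{1}{2}}^{(2)}$ and the reassembly of the sum over $\Gamma_\infty\backslash\Gamma$ into $E_{3-k}$ left implicit. Your write-up simply makes those implicit steps (target slash action $|_{(3-k,0)}$, termwise application, absolute convergence) explicit.
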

\begin{proof}
A direct computation shows that
$$
M_{\frac{1}{2}}\bigl((\det Y)^s\bigr)=s(s+\tfrac{1}{2})(\det Y)^s
$$
which proves the claim.
\end{proof}

\subsection{Fourier series expansions}
\label{ssec:fourierexpansions}

Maass \cite{Maass-diff} determines the Fourier series expansions of functions that are in the kernel of $\Omega_{\alpha, \beta}$.  We will first recall Maass' result (where we have slightly changed the notation of some variables to avoid confusion with our earlier notation), and then we will improve it in the case of harmonic Siegel-Maass forms.

\vspace{1ex}

\begin{thm}[\cite{Maass-diff}]
\label{thm:laplacefourierexpansion}
\label{thm:siegelmodularforms:harmonicfourierexpansions} 
Let $G(Z) = a(Y, T)\, e^{i \,\tr(TX)}$, where $T$ is a real symmetric $2\times 2$-matrix, and suppose 
$\Omega_{\alpha,\beta} (G) = 0$ where $\alpha+\beta\not= 1, \, \frac{3}{2}, \, 2$.  Write
$$
Y=\sqrt{\det Y}\left(\begin{matrix} (\text{\tt x}^2+\text{\tt y}^2)\text{\tt y}^{-1} & \text{\tt x}\text{\tt y}^{-1}\\ \text{\tt x}\text{\tt y}^{-1} & \text{\tt y}^{-1}\end{matrix}\right)
$$
and
$$
\text{\tt u}:=\tr(YT), \hspace{3ex} \text{\tt v}:=\big(\tr(YT)\big)^2-4\det (YT).
$$
Then $a(Y, T)$ is given as follows:
\begin{enumerate}[(a)]
\item If $T = 0$, then
\begin{equation}
\label{rank 0}
a(Y,0)=\phi(\text{\tt x},\text{\tt y})\det Y^{\frac{1}{2}(1-\alpha-\beta)}+c_1 \det Y^{\frac{3}{2}-\alpha-\beta}+c_2,
\end{equation}
where $c_1, c_2\in\C$ and $\phi(\text{\tt x},\text{\tt y})$ is an arbitrary solution (analytic for $\text{\tt y}>0$) of the wave equation
\begin{equation*}
\text{\tt y}^2(\phi_{\text{\tt x}\text{\tt x}}+\phi_{\text{\tt y}\text{\tt y}})-(\alpha+\beta-1)(\alpha+\beta-2)\phi=0.
\end{equation*}

\vspace{1ex}

\item If $\rank (T) = 1$, $T\geq 0$, then
\begin{equation}
\label{rank 1}
a(Y,T)=\phi(\text{\tt u})\det Y^{\frac{3}{2}-\alpha-\beta}+\psi(\text{\tt u}),
\end{equation}
where $\phi$ and $\psi$ are confluent hypergeometric functions that satisfy the following differential equations
\begin{equation*}
\begin{split}
\text{\tt u}\phi'' & +(3-\alpha-\beta)\phi'+(\alpha-\beta-\text{\tt u})\phi=0\\
\text{\tt u}\psi'' & +(\alpha+\beta)\psi'+(\alpha-\beta-\text{\tt u})\psi=0.\\
\end{split}
\end{equation*}
In particular, there are four linear independent solutions $a(Y,T)$ in this case.

\vspace{1ex}

\item \label{it:siegelmodularforms:realanalyticfourierexpansion_rkT2_Tg0} 
If $\rank (T) = 2$, $T>0$, then
\begin{equation}
\label{rank 2}
a(Y,T)=\sum_{n=0}^{\infty}g_n(\text{\tt u})\text{\tt v}^n \hspace{3ex} (|\text{\tt v}|<\text{\tt u}^2),
\end{equation}
where the functions $g_n(\text{\tt u})$ are recursively defined by
\begin{equation*}
4(n+1)^2\text{\tt u}g_{n+1}+\text{\tt u}g_n''+2(2n+\alpha+\beta)g_n'+(2(\alpha-\beta)-\text{\tt u})g_n=0
\end{equation*}
and
\begin{gather*}
  g_0(\text{\tt u})=\text{\tt u}^{1-\alpha-\beta} \, \psi(\text{\tt u}),
 \quad\text{with}\quad
 \psi'(\text{\tt u})=\frac{1}{\text{\tt u}}\phi(\text{\tt u})
 \text{,}\quad\text{and}
\\
  \phi''=\left(1+\frac{2(\beta-\alpha)}{\text{\tt u}}+\frac{(\alpha+\beta-1)(\alpha+\beta-2)}{\text{\tt u}^2}\right)\phi
\text{.}
\end{gather*}
In particular, there are three linear independent solutions $a(Y,T)$ in this case.

\vspace{1ex}

\item If $\rank (T) = 2$, $T \text{ indefinite}$, then
\begin{gather*}
a(Y, T) = \sum_{n = 0}^\infty h_n (\text{\tt v}) \text{\tt u}^n
\quad (\text{\tt u}^2 < \text{\tt v})
\text{,}
\end{gather*}
where the functions $h_n (\text{\tt v})$ are recursively defined by
\begin{gather*}
(n + 2) (n + 1) h_{n + 2} + 4 \text{\tt v} h''_n + 4 (\alpha + \beta + n)h'_n - h_n = 0
\end{gather*}
and
\begin{align*}
  (\alpha - \beta) h_1
&=
   8 \text{\tt v}^2 h'''_0 + 4(2 + 3 \alpha + 3 \beta)\text{\tt v} h''_0
\\
  &\qquad+ (4 (\alpha + \beta)^2 + 2 (\alpha + \beta - 1) - 2 \text{\tt v}) h'_0 - (\alpha + \beta)h_0
\text{,}
\\
  (\beta - \alpha) h_0
&=
  2 \text{\tt v} h'_1 + (\alpha + \beta) h_1
\text{.}
\end{align*}
In particular, there are four linear independent solutions $a(Y,T)$ in this case.
\end{enumerate}
Finally, any solution for the data $\{\alpha, \beta, T\}$ is also a solution for the data $\{\beta,\alpha, -T\}$. 
\end{thm}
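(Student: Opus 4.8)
The plan is to determine a single Fourier mode directly: substitute the ansatz $G(Z)=a(Y,T)\,e^{i\,\tr(TX)}$ into the matrix equation $\Omega_{\alpha,\beta}(G)=0$. Writing $\partial_Z,\partial_{\overline Z}$ in terms of $\partial_X,\partial_Y$ and letting the $X$-derivatives hit the exponential (each producing a factor drawn from the entries of $T$, while the $Y$-derivatives act on $a$), the four entries of $\Omega_{\alpha,\beta}G$ collapse to a coupled system of linear differential operators in the three real entries of $Y$ applied to $a(\cdot,T)$, with $T$ entering only as a parameter. The first task is to record this system cleanly; the matrix nature is genuine, so one must keep the trace part and the trace-free part separate, the vanishing of the full matrix being the actual constraint.

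To organize the solution I would exploit the covariance of $\Omega_{\alpha,\beta}$ under the Levi $\GL{2}(\R)\hookrightarrow\Sp{2}(\R)$ acting by $Z\mapsto gZ\,\trans g$, under which $Y\mapsto gY\,\trans g$ and $T\mapsto \trans g\,T\,g$ while $\tr(TX)$ is preserved. Two consequences drive everything. First, $T$ may be moved to a Sylvester normal form, so it suffices to treat the representatives $T=0$, $\rank(T)=1$ with $T\ge 0$, $T>0$, and $T$ indefinite (the remaining sign classes being recovered at the end). Second, the covariance, which carries the weight (determinant) factors of the slash action, forces $a(Y,T)$ to depend on $Y$ only through the joint $\GL{2}$-invariants $\ttu=\tr(YT)$ and $\ttv=\ttu^2-4\det(YT)$ of the pair $(Y,T)$, modulated by powers of the scaling $\det Y$. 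Here I would introduce the splitting $Y=\sqrt{\det Y}\,U(\ttx,\tty)$ into determinant and unimodular part, with $(\ttx,\tty)$ the hyperbolic coordinates on the latter, and rewrite the system accordingly. Since $\det(YT)=\det Y\det T$ one has $\ttv=\ttu^2-4\det Y\det T$, which both explains the case split by $\det T$ and pins down the region of $(\ttu,\ttv)$ that is geometrically realized ($|\ttv|<\ttu^2$ when $T>0$, from $0\le 4\det(YT)\le\tr(YT)^2$, and $\ttu^2<\ttv$ when $T$ is indefinite).

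With these coordinates the system decouples case by case into ordinary equations. When $T=0$ the operator sees only the unimodular part, and separating the $\det Y$-dependence produces the general transverse factor $\phi(\ttx,\tty)$ solving the stated hyperbolic wave equation attached to $\det Y^{\frac12(1-\alpha-\beta)}$, together with the two constant modes giving $c_1\det Y^{\frac32-\alpha-\beta}+c_2$, which is (a). When $\rank(T)=1$ the invariant $\ttv=\ttu^2$ degenerates, the radial equation in $\ttu$ becomes confluent hypergeometric, and its two indicial exponents, carried by the prefactors $\det Y^{\frac32-\alpha-\beta}$ and $1$, produce the two Kummer-type equations for $\phi$ and $\psi$ and the four-dimensional solution space of (b). For $T>0$ I would insert the ansatz $a=\sum_n g_n(\ttu)\,\ttv^n$, collect like powers of $\ttv$, and read off the three-term recursion together with the seed equation for $g_0$, then verify the three-dimensional solution space and convergence for $|\ttv|<\ttu^2$, giving (c); the indefinite case is handled symmetrically with $\ttu$ and $\ttv$ interchanged, ansatz $\sum_n h_n(\ttv)\,\ttu^n$, the stated recursion and the two relations fixing $h_0,h_1$, giving (d).

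The closing symmetry is cheap: the antiholomorphic involution $\mathfrak{i}:G(Z)\mapsto G(-\overline Z)$ fixes $Y$, sends $X\mapsto -X$ and hence $T\mapsto -T$, and interchanges $\partial_Z\leftrightarrow\partial_{\overline Z}$, so, using $N_\beta=\mathfrak{i}\,M_\beta\,\mathfrak{i}$, it conjugates $\Omega_{\alpha,\beta}$ into $\Omega_{\beta,\alpha}$ up to a transpose that preserves the zero locus; thus $a(Y,T)$ for the data $\{\alpha,\beta,T\}$ is a solution for $\{\beta,\alpha,-T\}$. I expect the real obstacle to be the two rank-$2$ cases: deriving the recursions requires a careful expansion of the decoupled operator in the $(\ttu,\ttv)$ variables, and one must separately establish the radius of convergence of the resulting series and confirm the dimension counts (four, three, four), neither of which is automatic from the formal recursion alone.
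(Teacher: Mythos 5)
The paper gives no proof of this theorem: it is recalled verbatim (up to renaming of variables) from Maass \cite{Maass-diff}, so the only meaningful comparison is with Maass's classical derivation, which your sketch largely reproduces — substitute the single Fourier mode, reduce the matrix equation $\Omega_{\alpha,\beta}(G)=0$ to a system of differential equations in the entries of $Y$ with $T$ as a parameter, pass to the coordinates $\det Y, \ttx, \tty$, normalize $T$ by Sylvester's law via the Levi embedding, and solve case by case. Your identification of the realized regions ($0\le\ttv<\ttu^2$ for $T>0$ and $\ttu^2<\ttv$ for $T$ indefinite, via the eigenvalues of $YT$), your dimension bookkeeping, and your closing involution argument (using $N_{\beta}=\mathfrak{i}\,M_{\beta}\,\mathfrak{i}$, that $\mathfrak{i}$ fixes $Y$, sends $T\mapsto -T$, and interchanges $\partial_Z\leftrightarrow -\partial_{\overline{Z}}$, hence carries $\Omega_{\alpha,\beta}$ into an operator with the same zero locus as $\Omega_{\beta,\alpha}$) are all sound and consistent with Maass.

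There is, however, one genuine gap: the assertion that covariance under $\GL{2}(\R)$ \emph{forces} $a(Y,T)$ to depend on $Y$ only through $\ttu,\ttv$ modulated by powers of $\det Y$. Covariance only implies that the stabilizer of $T$ acts on the solution space of the fixed-$T$ system; it does not make individual solutions invariant, and the principle is contradicted inside the theorem itself. In case (a) the invariants of the pair $(Y,0)$ reduce to $\det Y$, yet the general solution carries an arbitrary wave-equation factor $\phi(\ttx,\tty)$; in case (b) the stabilizer of a rank-one $T\ge 0$ is a noncompact two-dimensional group whose only invariant on $Y$ is $\ttu$, and the factor $\det Y^{\frac{3}{2}-\alpha-\beta}$ is visibly not invariant under it. The reduction to the stated functional forms — and with it the finite dimension counts four, three, four, which are the real content in cases (b)--(d) — must instead be extracted from the full matrix system: it is the fact that $\Omega_{\alpha,\beta}(G)=0$ imposes three independent scalar equations (trace and trace-free parts, as you note) that cuts the a priori infinite-dimensional kernel of any single scalar operator down to finitely many solutions of the indicated shape. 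This is exactly how Maass proceeds, and it is the step your proposal treats as a symmetry freebie; once it is replaced by a direct verification that the system closes on functions of $(\ttu,\ttv,\det Y)$ (respectively $(\ttx,\tty,\det Y)$ when $T=0$), the remainder of your program — the recursions, the seed equations for $g_0$ and $(h_0,h_1)$, and the convergence claims — goes through as planned.
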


\vspace{1ex}

We now recall some standard special functions, which are needed to state our results in the next theorem and remarks.  
Let $M_{\nu,\mu}$ and $W_{\nu,\mu}$ be the usual $M$-Whittaker function and $W$-Whittaker function, respectively, 
which are solutions to the differential equation
\begin{equation*} 
 \frac{\partial^2}{\partial w^2}f(w)+
 \left(-\frac{1}{4}+\frac{\nu}{w}+\frac{\frac{1}{4}-\mu^2}{w^2}\right)f(w)=0.
\end{equation*}
For fixed $\nu$ and $\mu$ we have the following asymptotic behavior as $y\ra\infty$
\begin{equation}
\label{Whittaker growth}
M_{\nu,\mu}(y)\sim \frac{\Gamma(1+2\mu)}{\Gamma(\mu-\nu+\tfrac{1}{2})}y^{-\nu}e^{\frac{y}{2}} \hspace{3ex} \text{and} \hspace{3ex} W_{\nu,\mu}(y)\sim y^{\nu}e^{-\frac{y}{2}}\,,
\end{equation}
where $\Gamma(\cdot)$ is the Gamma-function.  As usual, let $\Gamma(a ,y):=\int_{y}^{\infty}e^{-w}w^{a-1}\ dw$ denote the incomplete Gamma-function.  If $y\ra\infty$, then
\begin{equation}
\label{incomplete Gamma growth}
\Gamma \left( a,y \right)\sim y^{a-1}e^{-y}\,. 
\end{equation}
Let ${}_p {\rm F}_q$ be the generalized hypergeometric series
\begin{gather*}
  {}_p {\rm F}_q (a_1,\ldots,a_p;b_1, \ldots, b_q; z) 
:=
  \sum_{n} \frac{(a_1)_n \cdots (a_p)_n}{(b_1)_n \cdots (b_q)_n} \frac{z^n}{n!}
\text{,}
\end{gather*}
where $(a)_n:=a(a+1)(a+2)...(a+n-1)$ is the Pochhammer symbol.
The asymptotic behavior of the generalized hypergeometric function is quite complicated (see for example $\S5.11$ of \cite{Luke-I}), and we only remark here that the generalized hypergeometric function growths rapidly for generic parameters. 

Our next theorem sharpens Theorem~\ref{thm:laplacefourierexpansion} in the case of harmonic Siegel-Maass forms.  Note that the exponentials of the Fourier series expansions in Theorem~\ref{thm:laplacefourierexpansion} and Theorem~\ref{thm:laplacesiegelfourierexpansion} differ by $2 \pi$.

\vspace{1ex}

\begin{thm}
\label{thm:laplacesiegelfourierexpansion}
Let $\displaystyle F(Z)=\sum_{T} a(Y,T)e^{2 \pi i\,\tr(TX)}\in\widehat{\mathbb{M}}_k$.  
As in Theorem~\ref{thm:laplacefourierexpansion} we write
$$
\text{\tt u}:=\tr(YT) \hspace{2ex} \text{and} \hspace{2ex} \text{\tt v}:=\big(\tr(YT)\big)^2-4\det (YT),
$$
and $c_1,c_2\in\C$ are always constants. Then $a(Y, T)$ is given as follows:
\begin{enumerate}[(a)]
\item If $T = 0$, then
\begin{equation}
\label{rank 0'}
a(Y,0)=c_1 \det Y^{\frac{3}{2}-k}+c_2,
\end{equation}
which is in the kernel of $\xi_{\frac{1}{2}, k-\frac{1}{2}}^{(2)}$ if and only if $c_1=0$.

\item
\label{eq:rank 1' positive}
If $\rank (T) = 1$, $T\geq 0$, then two of the four fundamental solutions of $(b)$ in Theorem~\ref{thm:laplacefourierexpansion} do not occur.  Any Fourier coefficient that occurs is of the form 
\begin{gather*}
  c_1\, \text{\tt u}^{k-2} \det Y^{\frac{3}{2} - k}\, e^{2 \pi \text{\tt u}\,} \Gamma(2 - k, 4 \pi \text{\tt u}) 
+ c_2\, \text{\tt u}^{-\frac{k}{2}} W_{\frac{1-k}{2}, \frac{k - 1}{2}} (4 \pi \text{\tt u})
\text{,}
\end{gather*}
and $\xi_{\frac{1}{2}, k-\frac{1}{2}}^{(2)}\left(a(Y,T)e^{2 \pi i\,\tr(TX)}\right)=0$ if and only if $c_1=0$.

\vspace{1ex}


\vspace{1ex}

\item
\label{it:Tge0_solution_occurring}
If $\rank (T)=2$, $T >0$, then two of the three fundamental solutions of $(c)$ in Theorem~\ref{thm:laplacefourierexpansion} do not occur.  Any Fourier coefficient that occurs is of the form
\begin{gather*}
c_1\; \sum_{n = 0}^\infty g_n(2 \pi \text{\tt u})\, (4 \pi^2 \text{\tt v})^n
\end{gather*}
with $g_n$ as in $(c)$ of Theorem~\ref{thm:laplacefourierexpansion} and
\begin{gather*}
g_0 (\text{\tt u}) = \text{\tt u}^{1-k}\int_{\text{\tt u}}^{\infty} {\tilde{\text{\tt u}}}^{-1} W_{1 - k, (\sgn k) (k - \frac{3}{2})} (2 \tilde{\text{\tt u}}) \;d\tilde{\text{\tt u}}
\text{,}
\end{gather*}
and $\xi_{\frac{1}{2}, k-\frac{1}{2}}^{(2)}\left(a(Y,T)e^{2 \pi i\,\tr(TX)}\right)=0$ if and only if $c_1=0$.

\vspace{1ex}



\item If $\rank (T)=2$, $T$ indefinite, then three of the four fundamental solutions of $(d)$ in Theorem~\ref{thm:laplacefourierexpansion} do not occur.
\end{enumerate}
\end{thm}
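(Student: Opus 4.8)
The plan is to take Maass' classification in Theorem~\ref{thm:laplacefourierexpansion} as the starting point: for each admissible $T$ it supplies the full solution space of $\Omega_{\frac12,k-\frac12}(G)=0$, of the stated dimension, in terms of explicit ODEs or recursions. Specializing $\alpha=\frac12$, $\beta=k-\frac12$ (so $\alpha+\beta=k$, which avoids the excluded values $1,\frac32,2$ exactly because $k$ is odd with $k\neq1,3$), I would then impose condition~(3) of Definition~\ref{HSM}. Averaging $|F(Z)|\le C\,\tr(Y)^N$ over $X$ forces each coefficient $a(Y,T)$ to inherit a polynomial bound in $\tr(Y)$, and rewriting this in Maass' variables $\ttu=\tr(YT)$, $\ttv$, $\tty$ yields a polynomial bound as $\ttu\to\infty$ (resp.\ in the shape variables). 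The mechanism throughout is that this bound annihilates the coefficients of every fundamental solution whose large-$\ttu$ behavior is exponential (an $M$-Whittaker factor, cf.\ (\ref{Whittaker growth})) or of generalized-hypergeometric type, while keeping the decaying $W$-Whittaker and incomplete-Gamma solutions admissible via (\ref{Whittaker growth}) and (\ref{incomplete Gamma growth}).

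For the low-rank strata I would carry this out explicitly. In (a) the oscillatory solutions of Maass' wave equation grow exponentially in the shape and are discarded, and the surviving non-oscillatory power modes are removed using the transformation law of the constant term under the Levi of the Siegel parabolic, leaving only the $\det Y$-powers $c_1\det Y^{\frac32-k}+c_2$. In (b) I would convert Maass' two confluent-hypergeometric ODEs for $\phi$ and $\psi$ to Whittaker's equation by extracting the appropriate power of $\ttu$ and an exponential; two of the four fundamental solutions then carry $M$-Whittaker growth and are eliminated, and the admissible pair is exactly $c_1\,\ttu^{k-2}\det Y^{\frac32-k}e^{2\pi\ttu}\Gamma(2-k,4\pi\ttu)+c_2\,\ttu^{-\frac{k}{2}}W_{\frac{1-k}{2},\frac{k-1}{2}}(4\pi\ttu)$, the factor $2\pi$ accounting for the normalization of the exponential noted just before the theorem.

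For the $\xi$-statements I would apply $\xi_{\frac12,k-\frac12}^{(2)}=\det(Y)^{k-\frac32}M_{\frac12}$ from (\ref{Siegel-xi}) directly to the surviving solutions. In (a) the identity $M_{\frac12}\bigl((\det Y)^s\bigr)=s(s+\frac12)(\det Y)^s$ from the proof of Proposition~\ref{xi of Poincare series} annihilates the constant but sends $\det Y^{\frac32-k}$ to a nonzero multiple of $\det Y^{0}$ (using $k\neq\frac32,2$), yielding the dichotomy ``$\xi=0$ iff $c_1=0$''; in (b) and (c) I would likewise compute $M_{\frac12}$ on each admissible solution and isolate which linear combination it kills, consistent with the factorization $\mathcal{C}=\xi_{3-k,0}^{(2)}\xi_{\frac12,k-\frac12}^{(2)}$ and the vanishing $\mathcal{C}(F)=0$ on $\widehat{\mathbb{M}}_k$.

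The main obstacle is the two rank-$2$ strata (c) and (d), where the solutions are infinite series $\sum_n g_n(\ttu)\ttv^n$ and $\sum_n h_n(\ttv)\ttu^n$ defined through Maass' recursions rather than single named functions. Here I would first solve the second-order base equation for $\phi$ (hence for $g_0$ via $\psi'=\phi/\ttu$ and $g_0=\ttu^{1-k}\psi$), reduce it to Whittaker's equation, and show that the only admissible base solution is the decaying one $g_0(\ttu)=\ttu^{1-k}\int_\ttu^\infty\tilde\ttu^{-1}W_{1-k,(\sgn k)(k-\frac32)}(2\tilde\ttu)\,d\tilde\ttu$; the sign $\sgn k$ arises precisely because $\mu^2=(k-1)(k-2)+\tfrac14=(k-\tfrac32)^2$ forces $\mu=|k-\tfrac32|$, which splits the regimes $k>3$ and $k<0$. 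The delicate remaining steps are to propagate this bound through the recursion to control the full series on $|\ttv|<\ttu^2$ (and similarly in (d) on $\ttu^2<\ttv$, where three of the four fundamental solutions must be excluded), verifying both convergence and that the resulting coefficient genuinely meets the polynomial growth bound rather than merely its leading term.
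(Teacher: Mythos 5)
Your skeleton matches the paper's strategy (Maass' classification, then imposing condition (3) of Definition \ref{HSM} via the integral representation of $a(Y,T)$ over $X$, the $\GL{2}(\ZZ)$\nbd invariance argument for $T=0$, and Whittaker asymptotics for $\rank(T)=1$), but there is a genuine gap exactly where you defer to ``delicate remaining steps'': the rank-$2$ strata. Your central mechanism --- discard any fundamental solution whose large-$\ttu$ behavior is exponential or hypergeometric, keep the decaying ones --- does not suffice there, for two reasons. First, for $T>0$ one of the three fundamental solutions has base term $g_0(\ttu)=\ttu^{1-k}$, which is only a power of $\ttu$ and hence is \emph{not} excluded by any growth statement about $g_0$ itself; admissibility cannot be read off the base solution. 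The paper excludes it by proving (its Lemma \ref{la:siegelmodularforms:coefficients_of_power_series_with_recursion}) that Maass' recursion $4(n+1)^2\ttu g_{n+1}+\ttu g_n''+\dots=0$ forces the \emph{full} series $\sum_n g_n(\ttu)\ttv^n$ to grow rapidly whenever the base term is this Laurent polynomial --- a quantitative analysis of the recursion (bounds on coefficients, control of valuations, positivity of the high-order coefficients, choice of a scale $\kappa$) that your proposal does not supply and that is the technical heart of the proof. Second, in the indefinite case (d) you must exclude \emph{linear combinations}, not individual solutions: two rapidly growing hypergeometric series could a priori cancel. The paper handles this with a separate argument (its Lemma \ref{la:siegelmodularforms:hypergeometriccoefficientgrowth}): the ratio of corresponding power-series coefficients tends to zero, so the coefficients of any nonzero combination are eventually of one sign and the combination still grows rapidly; it also has to recognize that two of the listed solutions agree up to a polynomial, reducing to the polynomial case, which again needs the recursion lemma. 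None of this is recoverable from asymptotics of single fundamental solutions.

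Two smaller points. For (d), your plan to ``reduce to Whittaker's equation'' does not apply: $h_1$ satisfies the fourth-order equation (\ref{application of Lemma}), and even verifying that the explicit ${}_p{\rm F}_q$'s are solutions requires the paper's Lemma \ref{la:hypergeometricsolution} (reducing $\mathcal{D}\,\ttv^l{}_p{\rm F}_q=0$ to finitely many coefficient identities, checked by computer). And for the $\xi$\nbd statement in (c) with $k>3$, ``compute $M_{\frac12}$ on each admissible solution'' is not a direct computation, since the admissible solution is an infinite series defined by recursion; the paper instead runs an asymptotic-expansion argument for $\phi$ and $\psi$ near $\tilde{\ttu}=0$, together with analytic continuation in $k$, to show nontrivial solutions are never in the kernel of $\xi^{(2)}_{\frac12,k-\frac12}$.
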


\vspace{1ex}

Before we prove Theorem~\ref{thm:laplacesiegelfourierexpansion} we give the additional solutions (computed with Mathematica) of Theorem~\ref{thm:laplacefourierexpansion} 
that do not occur in Theorem~\ref{thm:laplacesiegelfourierexpansion}. 

\vspace{1ex}

\begin{remarks}
\begin{enumerate}[(a)]
\setcounter{enumi}{1}

\item If $\rank (T) = 1$, $T\ge 0$, then the two additional fundamental solutions are:
\begin{gather*}
\text{\tt u}^{k-2} e^{2 \pi \text{\tt u}}
\quad \text{and} \quad
\begin{cases}
  e^{-2 \pi \text{\tt u}\,} {}_1 {\rm F}_1 (-\frac{1}{2} + k; k; 4 \pi \text{\tt u})
& \text{if }k > 3\text{,}
\\
  \text{\tt u}^{1 - k} e^{-2 \pi \text{\tt u}\,} {}_1 {\rm F}_1 (\frac{1}{2}; 2 - k; 4 \pi \text{\tt u})
& \text{if }k < 0\text{.}
\end{cases}
\end{gather*}
\vspace{1ex}


\item If $T > 0$, then the two additional solutions arise via
$$
g_0(\tt u)=\text{\tt u}^{1 - k}
$$ 
and
\begin{gather*}
g_0 (\text{\tt u}) = \text{\tt u}^{1-k}\int_{\text{\tt u}}^{\infty} {\tilde{\text{\tt u}}}^{-1} M_{1 - k, (\sgn k) (k - \frac{3}{2})} (2 \tilde{\text{\tt u}}) \;d\tilde{\text{\tt u}}
\text{.}
\end{gather*}



\vspace{1ex}



\item If $\rank (T) = 2$, $T \text{ indefinite}$, then the three additional fundamental solutions for $h_1$ are different when $k > 3$ and $k < 0$.  If $k > 3$ they are:
\begin{gather*}
  {}_1 {\rm F}_2\bigl(\tfrac{1}{2}; \tfrac{1 + k}{2}, 1 + \tfrac{k}{2}; \pi^2  \text{\tt v}\bigr)
\text{,}\qquad
  \text{\tt v}^{\frac{-k}{2}} {}_1 {\rm F}_2 \bigl(\tfrac{1 - k}{2}; \tfrac{1}{2}, 1 - \tfrac{k}{2}; \pi^2 \text{\tt v}\bigr)
\end{gather*}
and
\begin{gather*}
  (\pi^2 \text{\tt v})^{\tfrac{1-k}{2}} {}_1 {\rm F}_2\bigl(1 - \tfrac{k}{2}; \tfrac{3}{2}, \tfrac{3 - k}{2}; \pi^2 \text{\tt v}\bigr)
  -
  \frac{\big(1 - \frac{k}{2}\big))_{\frac{k - 1}{2}} \Gamma\big(\frac{3 - k}{2}\big)}
       {\big(\frac{3}{2}\big)_{\frac{k - 1}{2}} \big(\frac{k - 1}{2}\big)!}
  {}_1 {\rm F}_2\bigl(\tfrac{1}{2}; \tfrac{1 + k}{2}, 1 + \tfrac{k}{2}; \pi^2  \text{\tt v}\bigr)
\text{.}
\end{gather*}
Note that the second solution above is a Laurent polynomial in $\text{\tt v}^{-\tfrac{1}{2}}$.

If $k < 0$, then the three additional fundamental solutions for $h_1$ are given by
\begin{gather*}
\text{\tt v}^{\tfrac{1-k}{2}} {}_1 {\rm F}_2\bigl(1 - \tfrac{k}{2}; \tfrac{3}{2}, \tfrac{3 - k}{2}; \pi^2 \text{\tt v}\bigr)
\text{,} \qquad
\text{\tt v}^{\tfrac{3}{2} - k} {}_2 {\rm F}_3\bigl(1, 2 - k; \tfrac{5}{2} - k, 2 - \tfrac{k}{2}, \tfrac{5 - k}{2}; \pi^2 \text{\tt v}\bigr)
\quad \text{and}
\\
{}_1 {\rm F}_2\bigl(\tfrac{1}{2}; \tfrac{1 + k}{2}, 1 + \tfrac{k}{2}; \pi^2  \text{\tt v}\bigr)
-
\frac{\big( \frac{1}{2} \big)_{\frac{1 - k}{2}}
      \Gamma\big(\frac{k + 1}{2}\big)}
     {\big( 1 + \frac{k}{2} \big)_{\frac{1 - k}{2}}
      \big(\frac{1 - k}{2}\big)!}
\big(\pi^2\text{\tt v}\big)^{\tfrac{1-k}{2}} {}_1 {\rm F}_2\bigl(1 - \tfrac{k}{2}; \tfrac{3}{2}, \tfrac{3 - k}{2}; \pi^2 \text{\tt v}\bigr)
\text{.}
\end{gather*}

Individual generalized hypergeometric series may not be not defined for some $k$, but linear combinations of generalized hypergeometric series can be analytically continued for such $k$, and we always refer to their analytic continuations.
\end{enumerate}


\end{remarks}

\vspace{1ex}

\begin{proof}[\indent \bf Proof of Theorem~\ref{thm:laplacesiegelfourierexpansion}:]
We used Mathematica to find the explicit solutions for $a(Y,T)$ in Theorem~\ref{thm:laplacefourierexpansion}.  
It is easy to see that the functions in $(a)$, $(b)$, and $(c)$ of Theorem~\ref{thm:laplacesiegelfourierexpansion} and the functions in $(b)$ and $(c)$ of the remarks to Theorem~\ref{thm:laplacesiegelfourierexpansion} are indeed solutions, and also that the functions in $(a)$, $(b)$, and $(c)$ of Theorem~\ref{thm:laplacesiegelfourierexpansion} yield solutions that satisfy the growth condition $(3)$ of Definition \ref{HSM}. 
The case where $T$ is indefinite is more complicated.  We will first verify directly that the functions in $(d)$ of the remarks to Theorem~\ref{thm:laplacesiegelfourierexpansion} are solutions.  Then we will show that no linear combination of the solutions given in the remarks to Theorem~\ref{thm:laplacesiegelfourierexpansion} satisfies the growth condition $(3)$ of Definition \ref{HSM}.
Finally, we will show that for $T > 0$ and $k > 3$ any possible nontrivial solution is not in the kernel of $\xi_{\frac{1}{2}, k - \frac{1}{2}}^{(2)}$.

The computations are quite involved and where partially performed with the help of Sage~\cite{sage} and Singular~\cite{singular}.  The computer code is posted on the author's homepages.

\vspace{1ex}

Let $T$ be indefinite.  We confirm that the functions in $(d)$ of the remarks to Theorem~\ref{thm:laplacesiegelfourierexpansion} are solutions for $a(Y,T)$ in Theorem~\ref{thm:laplacefourierexpansion} 
by showing that for generic $k$ the following generalized hypergeometric series are the solutions for $h_1(\text{\tt v})$ in $(d)$ of Theorem~\ref{thm:laplacefourierexpansion}:
\begin{equation}
\label{eq:indefinitehypergeometric}
\begin{split}
&
{}_1 {\rm F}_2\bigl(\tfrac{1}{2}; \tfrac{1 + k}{2}, 1 + \tfrac{k}{2}; \tfrac{\text{\tt v}}{4}\bigr)
\text{,}\quad
\text{\tt v}^{-\tfrac{k}{2}} {}_1 {\rm F}_2\bigl(\tfrac{1 - k}{2}; \tfrac{1}{2}, 1 - \tfrac{k}{2}; \tfrac{\text{\tt v}}{4}\bigr)
\text{,}
\\&
\text{\tt v}^{\tfrac{1-k}{2}} {}_1 {\rm F}_2\bigl(1 - \tfrac{k}{2}; \tfrac{3}{2}, \tfrac{3 - k}{2}; \tfrac{\text{\tt v}}{4}\bigr),
\quad \text{and} \quad
\text{\tt v}^{\tfrac{3}{2} - k} {}_2 {\rm F}_3\bigl(1, 2 - k; \tfrac{5}{2} - k, 2 - \tfrac{k}{2}, \tfrac{5 - k}{2}; \tfrac{\text{\tt v}}{4}\bigr)
\text{.}\\
\end{split}
\end{equation}

We will need the following Lemma on generalized hypergeometric series with parameters ${\bf a} = a_1,\ldots, a_p$ and ${\bf b} = b_1,\ldots,b_q$, 
where $a_i,b_j\in\C[k]$.
\begin{lemma}
\label{la:hypergeometricsolution}
Suppose $\mathcal{D}$ is an order $D$ linear differential operator on smooth functions of $\text{\tt v}$.  Assume that $\mathcal{D}$ has coefficients in $\C[\text{\tt v}, k]$, and that these coefficients have maximal degree $m_{\text{\tt v}}$ in $\text{\tt v}$.  If $l\in\Z$ and all $b_j$'s are either positive or nonintegral, then
\begin{gather*}
  \mathcal{D}\, \text{\tt v}^l {}_p {\rm F}_q ({\bf a}; {\bf b}; \text{\tt v}) = 0
\end{gather*}
if and only if the $t$-th coefficients ($l-D\leq t \leq l+D+m_{\text{\tt v}}$) of $\mathcal{D}\, \text{\tt v}^l {}_p {\rm F}_q ({\bf a}; {\bf b}; \text{\tt v})$ vanish as functions of $k$.
\end{lemma}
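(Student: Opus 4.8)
The plan is to reduce the identity $\mathcal{D}\,\ttv^l\,{}_p{\rm F}_q(\mathbf{a};\mathbf{b};\ttv)=0$ to a statement about the Taylor--Laurent coefficients and then exploit the first--order recurrence satisfied by the coefficients of ${}_p{\rm F}_q$. I would write $\ttv^l\,{}_p{\rm F}_q(\mathbf{a};\mathbf{b};\ttv)=\sum_{n\ge 0}c_n\,\ttv^{n+l}$ with $c_n=\frac{(a_1)_n\cdots(a_p)_n}{(b_1)_n\cdots(b_q)_n\,n!}$, so that
$$(n+1)\,\textstyle\prod_{j}(b_j+n)\,c_{n+1}=\prod_{i}(a_i+n)\,c_n.$$
The hypothesis that each $b_j$ is positive or nonintegral guarantees that $\prod_j(b_j+n)\neq 0$ for every integer $n\ge 0$, so this recurrence never degenerates. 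Using $\partial^{\,j}\ttv^{m}=m(m-1)\cdots(m-j+1)\,\ttv^{m-j}$ and collecting powers of $\ttv$, I would write $\mathcal{D}\,\ttv^l\,{}_p{\rm F}_q=\sum_t d_t(k)\,\ttv^{t}$, where each $d_t$ is a finite $\C[k]$--linear combination of the $c_n$ with $n$ in the window $t-l-m_{\ttv}\le n\le t-l+D$, the coefficients being polynomial in $t$ of degree at most $D$. The lowest power that can occur is $\ttv^{\,l-D}$, so $d_t=0$ automatically for $t<l-D$; this is the reason the stated range begins at $l-D$.

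The \emph{only if} direction is immediate: if $\mathcal{D}\,\ttv^l\,{}_p{\rm F}_q=0$ as a (formal) series then all of its coefficients vanish, in particular those indexed by $l-D\le t\le l+D+m_{\ttv}$. The substance is the converse. The key structural fact I would use is that, because the $c_n$ obey a \emph{first--order} recurrence, for each fixed integer $\delta$ the ratio $c_{n+\delta}/c_n$ is a single rational function $\rho_\delta(n,k)$; hence there is one rational function $\mu(n,k)=P(n,k)/\Omega(n,k)$ with
$$d_t=\mu(n,k)\,c_n,\qquad n:=t-l,$$
valid for all $t$ with $n\ge m_{\ttv}$ (so that the whole window lies in $n\ge 0$). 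Here $\Omega(n,k)$ is a product of the factors $(n+r+1)\prod_j(b_j+n+r)$ and is nonzero on the relevant range precisely by the hypothesis on the $b_j$, while $P(n,k)$ is a polynomial in $n$ and $k$. Equivalently, $(d_t)_t$ itself satisfies a first--order linear recurrence with coefficients in $\C[k][t]$.

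With this in hand I would run the vanishing of the assumed block of coefficients through the recurrence. Vanishing of $d_t$ \emph{as a function of $k$} decouples into two mechanisms: either a numerator parameter forces $(a_i)_{n}\equiv 0$, in which case $c_{n'}\equiv 0$ for all $n'\ge n$ and the corresponding $d_{t'}$ vanish automatically (Pochhammer propagation); or the rational factor must vanish, i.e.\ $P(n,k)\equiv 0$ in $k$. The block $l-D\le t\le l+D+m_{\ttv}$ is designed to contain every exceptional index: the boundary indices $l-D\le t<l+m_{\ttv}$, where the window is truncated by $c_n=0$ for $n<0$, together with the ``generic'' arguments $n=m_{\ttv},\dots,m_{\ttv}+D$ at which $d_t=\mu(n,k)\,c_n$ holds honestly. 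Since $\Omega\neq 0$ and the $c_n$ are not identically zero on this generic sub-block, the assumed vanishing pins down $P$ at those arguments; I would then conclude $P(n,k)\equiv 0$ in both variables, hence $\mu\equiv 0$ and $d_t\equiv 0$ for all $t$.

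The main obstacle is exactly the counting that makes the window wide enough: one must bound the degree in $n$ of the reduced numerator $P$ (after the cancellations intrinsic to the hypergeometric recurrence, e.g.\ the systematic cancellation of the $(n+1)$ coming from $\theta=\ttv\partial$ against the $(n+1)$ in the denominator of the recurrence) and verify that $D+1$ generic arguments suffice, while simultaneously checking that all truncation-boundary indices lie in $[l-D,\,l+m_{\ttv})$. The hypothesis that the $b_j$ are positive or nonintegral is used throughout to keep $\Omega$ and the denominators of the recurrence from vanishing, which is what both legitimizes the factorization $d_t=\mu\,c_n$ and lets the vanishing propagate without obstruction. In practice, for the four specific ${}_p{\rm F}_q$'s of \eqref{eq:indefinitehypergeometric} these bounds are small and the finitely many coefficient identities can be checked directly (e.g.\ with Sage and Singular), which is how the Lemma will be applied.
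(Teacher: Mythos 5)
There is a genuine gap, and it sits exactly where you place ``the main obstacle'': your converse direction needs a bound $\deg_n P \le D$ on the reduced numerator of $\mu$, so that vanishing at the $D+1$ untruncated window indices $n = m_\ttv, \ldots, m_\ttv + D$ forces $P \equiv 0$. You never prove this bound, and it is false in general. When you rewrite all $c_{n+r}$ in the window in terms of a single reference coefficient, the common denominator acquires one factor $(n+r+1)\prod_j (b_j+n+r)$ per step of the recurrence (and, for shifts below your reference index $n = t-l$, also factors $\prod_i(a_i+n+r)$, so your description of $\Omega$ is inaccurate as well); only the factorial parts cancel against the falling factorials in the polynomial coefficients, while the $b_j$\nbd Pochhammer parts survive and inflate the degree of $P$ beyond $D$. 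Concretely, for ${}_0{\rm F}_1(;b;\ttv)$ with $c_n = 1\big/\bigl((b)_n\, n!\bigr)$ and $\mathcal{D} = \partial_\ttv^2 + \ttv\partial_\ttv$ (so $D = 2$, $m_\ttv = 1$, $l = 0$) one finds
\begin{gather*}
d_n \;=\; (n+2)(n+1)\,c_{n+2} + n\,c_n \;=\; \frac{1 + n(b+n)(b+n+1)}{n\,(b+n-1)(b+n)(b+n+1)}\; c_{n-1}\text{,}
\end{gather*}
whose reduced numerator has degree $3 > D$ in $n$; a cubic cannot be forced to vanish by $D+1=3$ evaluations. So the generic window points alone can never pin down $P$, and a correct argument would have to extract real constraints from the truncated boundary coefficients $l-D \le t < l+m_\ttv$, which your proposal treats as mere bookkeeping and never uses. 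The closing appeal to checking the four series of (\ref{eq:indefinitehypergeometric}) with Sage is also circular: the lemma is precisely the statement that a finite check suffices, so it cannot be established by performing that check.

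For comparison, the paper's proof works with functions rather than coefficient recurrences: it inducts on the order of $\mathcal{D}$, using the parameter-shift formula $\partial_\ttv\, {}_p{\rm F}_q({\bf a};{\bf b};\ttv) = \bigl(\prod_i a_i \big/ \prod_j b_j\bigr)\, {}_p{\rm F}_q({\bf a}+1;{\bf b}+1;\ttv)$, to bring $\mathcal{D}\,\ttv^l\,{}_p{\rm F}_q({\bf a};{\bf b};\ttv)$ into the normal form $\ttv^{l-D}\bigl({}_p{\rm F}_q({\bf a}+D;{\bf b}+D;\ttv)\, p_1 + p_2\bigr)$ with $p_1, p_2 \in \C(k)[\ttv]$ of degree at most $2D + m_\ttv$, and the finite window is read off from that normal form. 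Your route via the first-order recurrence of the $c_n$ is genuinely different, but as it stands it cannot be completed: the missing degree bound is not a technicality left to be ``verified'' but the precise point at which the argument breaks.
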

\begin{proof}
It suffices to prove that
\begin{gather*}
  \mathcal{D} \, \text{\tt v}^l {}_p {\rm F}_q ({\bf a}; {\bf b}; \text{\tt v})
=
  {\tt v}^{l - D} \bigl({}_p {\rm F}_q ({\bf a} + D; {\bf b} + D; \text{\tt v}) p_1
                       + p_2 \bigr)
\end{gather*}
for some $p_1, p_2\in\C(k)[\tt v]$ of degree at most $2 D + m_{\text{\tt v}}$.  Without loss of generality let $\mathcal{D}=\partial_{\tt v}^i$ with $i \in \{0,\ldots,D\}$ and, in particular, $m_{\text{\tt v}} = 0$.

We proceed by mathematical induction on $D$.  The case $D = 0$ is clear. 
Suppose $\mathcal{D} = c_1 \partial_{\text{\tt v}} \tilde{\mathcal{D}} + c_2$ for some constants $c_1, c_2$ and an order $D - 1$ operator $\tilde{\mathcal{D}}$.  By induction hypothesis we have
\begin{gather*}
  \tilde{\mathcal{D}} \, \text{\tt v}^l {}_p {\rm F}_q ({\bf a}; {\bf b}; \text{\tt v})
= 
  {\tt v}^{l - D + 1} \bigl({}_p {\rm F}_q ({\bf a} + D - 1; {\bf b} + D - 1; \text{\tt v}) \tilde{p}_1
                          + \tilde{p}_2 \bigr)
\text{,}
\end{gather*}
where $\tilde{p}_1, \tilde{p}_2$ have maximal degree $2 D - 2$.  The definition of the generalized hypergeometric functions implies the relations
$$
  \text{\tt v}^l{}_p {\rm F}_q ({\bf a}; {\bf b}; \text{\tt v})
= 
  \text{\tt v}^{l - 1} \Big( \prod_i a_i \prod_j b_j^{-1} \Big) (\text{\tt v} + \text{\tt v}^2 \, {}_p {\rm F}_q ({\bf a} + 1; {\bf b} + 1; \text{\tt v}))
$$
and
$$
\partial_{\text{\tt v}}\, \text{\tt v}^l {}_p {\rm F}_q ({\bf a}; {\bf b}; \text{\tt v})
=
\text{\tt v}^{l - 1} \bigg(\Big( \prod_i a_i \prod_j b_j^{-1}\Big) \text{\tt v}\, {}_p {\rm F}_q ({\bf a} + 1; {\bf b} + 1; \text{\tt v})+l\,{}_p {\rm F}_q ({\bf a}; {\bf b}; \text{\tt v})\bigg)
\text{,}
$$
which yield the claim.
\end{proof}

Lemma \ref{la:hypergeometricsolution} allows us to reduce the proof to a computation of finitely many coefficients in a series expansion with respect to $\text{\tt v}$.  Note that the defining differential equations for $h_0$ and $h_1$ in $(d)$ of Theorem~\ref{thm:laplacefourierexpansion} imply
\begin{equation}
\label{application of Lemma}
\begin{split}
 0=& - 16 {\tt v}^3 \, \partial_{\tt v}^4 h_1
 - 32 (k + 2) {\tt v}^2 \, \partial_{\tt v}^3 h_1
 - 4 (5 k^2 + 15 k + 7 - {\tt v}) {\tt v} \, \partial_{\tt v}^2 h_1
\\&
 - 2 (2 k^3 + 5 k^2 + k - 2 - 2 k {\tt v} - 2 {\tt v}) \, \partial_{\tt v} h_1
 + (2 k - 1) \, h_1\,.\\
\end{split}
\end{equation}
By Lemma \ref{la:hypergeometricsolution}, we only need to verify that $11$ ($D=4, m_{\text{\tt v}} = 3$) coefficients of the left hand side of (\ref{application of Lemma}) vanish if $h_1$ is any of the generalized hypergeometric functions in (\ref{eq:indefinitehypergeometric}). With the help of Sage \cite{sage} we found that these coefficients vanish indeed, which proves that the functions in (\ref{eq:indefinitehypergeometric}) are solutions for $h_1(\text{\tt v})$ in $(d)$ of Theorem~\ref{thm:laplacefourierexpansion}.


\vspace{1ex}

Now we show that no linear combination of the functions in $(b)$, $(c)$, and $(d)$ of the remarks to Theorem~\ref{thm:laplacesiegelfourierexpansion} occurs as a solution for $a(Y,T)$.  First, we also have to exclude the solution to the wave equation in (\ref{rank 0}).

Consider $T = 0$.  If $U \in \GL{2}(\Z)$, then $F(\trans{U}ZU)=(\det U)^k F(Z) = \pm F(Z)$.  Hence $a(Y,0)= \pm a(\trans{U}YU, 0)$, 
and we used Sage \cite{sage} and Singular \cite{singular} to show that the solution in (\ref{rank 0}) reduces to (\ref{rank 0'}).

\vspace{1ex}

For the remaining cases we will analyze the growth of Fourier coefficients.  Note that if $F(Z)=\sum_{T} a(Y,T)e^{2 \pi i\,\tr(TX)}\in\widehat{\mathbb{M}}_k$, 
then
\begin{gather*}
  a(Y,T)
=
  \int_{\RR^3} F(Z) e^{- 2\pi i\tr(TX)} \; dX
\text{,}
\end{gather*}
and condition $(3)$ of Definition \ref{HSM} implies that $a(Y,T)$ does not grow rapidly.


Consider $\rank(T)=1$.  The asymptotic behavior of the exponential function and generalized hypergeometric series show that no linear combination of the functions in $(b)$ of the remarks to Theorem~\ref{thm:laplacesiegelfourierexpansion} occurs as a solution for $a(Y,T)$.

\vspace{1ex}

To treat the case $T > 0$ we will need the following lemma (see also~\cite{Raum-thesis}), which uses the valuation of a Laurent polynomial in $\ttu$ normalized by
\begin{gather*}
  {\rm val}_\ttu p
:=
  \max\{ l \in \ZZ \,:\, \ttu^{-l} p \in \CC[\ttu] \}
\text{.}
\end{gather*}


\begin{lemma}
\label{la:siegelmodularforms:coefficients_of_power_series_with_recursion}
Suppose that a sequence of Laurent polynomials $l_n$ in $\ttu$ satisfies a recursion of the form
\begin{gather*}
  l_{n + 1}
=
  \sum_{d = 0}^D p_{n,d}\, l_n^{(d)}
\text{,}
\end{gather*}
where $D\geq 0$, $l_n^{(d)}$ is the $d$-th derivative of $l_n$, and $p_{n,d}$ are Laurent polynomials in $\ttu$ with $\deg_\ttu p_{n,0} = 0$ and $\deg_\ttu p_{n,d} < d$ for $d \ne 0$.  Assume that the valuation of all $p_{n,d}$ 
is uniformly bounded, and let $V$ be a lower bound on ${\rm val}_\ttu (p_{n,d}) - d$.  Suppose that $(n |V|)^d \, p_{n,d}$ has uniformly bounded coefficients as $n \rightarrow \infty$.  If the leading coefficients of $l_0$ and $p_{n,0}$ are positive, then there is a constant $\kappa$ such that the series
\begin{gather}
\label{eq:siegelmodularforms:power_series_with_recursion}
  \sum_{n = 0}^\infty l_n \cdot \Big(\frac{\ttu}{\kappa}\Big)^n
\end{gather}
is well-defined as a formal Laurent series, and such that (\ref{eq:siegelmodularforms:power_series_with_recursion}) has bounded coefficients.

If, in addition,
\begin{multline}
\label{eq:siegelmodularforms:asymptotic_recursion_contribution}
  {{n - i + \#\{(d,j) : (p_{{\td n},d})_j \ne 0 \text{ for some }{\td n}\} - 1} \choose n - i}
\\[4pt]
  \cdot
  \big( |V| + |D_l| + (D_l - V_l) \big)^{n - i} 
  \bigg( \prod_{n' = j + 1}^{n + j} (p_{n', 0})_0 \bigg)
  \Big( \max_{(d,j) \ne (0,0)}  h_{d,j} \Big)^{n - i}
\end{multline}
is bounded for $n \ge 1$, $j \ge \deg_\ttu l_0$, and $0 \le i \le \min\{n, \deg_\ttu l_0 - {\rm val}_\ttu l_0\}$, where the first factor of (\ref{eq:siegelmodularforms:asymptotic_recursion_contribution}) is the usual binomial coefficient and
\begin{gather*}
  h_{d,j}
:=
  \Bigg(
  \sum_{n' = 0}^{n + j} \frac{(n' + 1)^d \big| (p_{n',d})_j \big|}
                          {(p_{n',0})_0}
  \Bigg)^{\frac{1}{d - j}}
\text{,}          
\end{gather*}
then $\kappa$ can be chosen such that all coefficients of $\ttu^j$ with $j > \deg_\ttu l_0$ in~\eqref{eq:siegelmodularforms:power_series_with_recursion} are positive. In particular, in this case (\ref{eq:siegelmodularforms:power_series_with_recursion}) grows rapidly as $\ttu \rightarrow \infty$.

%
\end{lemma}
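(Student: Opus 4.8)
The whole argument is most naturally carried out coefficientwise. Write $l_n=\sum_t (l_n)_t\,\ttu^t$ and $p_{n,d}=\sum_s (p_{n,d})_s\,\ttu^s$. Since differentiating $\ttu^{t-s+d}$ a total of $d$ times produces the falling factorial $(t-s+d)(t-s+d-1)\cdots(t-s+1)$, the recursion reads
\[
(l_{n+1})_t=\sum_{d=0}^D\sum_s (p_{n,d})_s\,(t-s+d)(t-s+d-1)\cdots(t-s+1)\,(l_n)_{t-s+d}.
\]
The first thing I would record is how degrees and valuations propagate. Because $\deg_\ttu p_{n,0}=0$ while $\deg_\ttu p_{n,d}<d$ for $d\ne 0$, every summand with $(d,s)\ne(0,0)$ draws on a coefficient $(l_n)_{t'}$ with $t'=t-s+d>t$, whereas the term $(d,s)=(0,0)$ contributes exactly $(p_{n,0})_0\,(l_n)_t$. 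Consequently the top degree is preserved, $\deg_\ttu l_n=\deg_\ttu l_0$ with leading coefficient $(l_0)_{\deg_\ttu l_0}\prod_{n'<n}(p_{n',0})_0>0$, while the valuation can fall by at most $|V|$ at each step, so $\mathrm{val}_\ttu l_n\ge \mathrm{val}_\ttu l_0+nV$.

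The first assertion then comes from a uniform bound on the recursion kernel. The valuation estimate forces the argument $m:=t-s+d$ of the falling factorial to obey $m\ge \mathrm{val}_\ttu l_0+nV$ and $m\le \deg_\ttu l_0-V$, hence $|m|=O(n|V|)$ and $|m(m-1)\cdots(m-d+1)|=O\big((n|V|)^d\big)$ (when $V=0$ the falling factorials are already bounded and the estimate is immediate). Multiplying this against the hypothesis $|(p_{n,d})_s|\le B\,(n|V|)^{-d}$ shows that each weight $(p_{n,d})_s\,(t-s+d)(t-s+d-1)\cdots(t-s+1)$ is bounded by an absolute constant; since the uniform bounds on the valuations and degrees of the $p_{n,d}$ make the number of contributing pairs $(d,s)$ bounded, I obtain $\|l_{n+1}\|_\infty\le C\,\|l_n\|_\infty$ for the supremum norm on coefficients, and therefore $\|l_n\|_\infty\le C^n\|l_0\|_\infty$. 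Choosing any $\kappa>C$, the coefficient of $\ttu^j$ in \eqref{eq:siegelmodularforms:power_series_with_recursion}, namely $\sum_{n\ge j-\deg_\ttu l_0}\kappa^{-n}(l_n)_{j-n}$, is dominated by $\|l_0\|_\infty\sum_n (C/\kappa)^n$, so the series is well defined with uniformly bounded coefficients.

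For the second assertion I would single out one positive main term and dominate everything else by it. Fix $j>\deg_\ttu l_0$ and set $n_0:=j-\deg_\ttu l_0$; this is the smallest index contributing to the coefficient $c_j$ of $\ttu^j$, and that contribution is precisely the leading coefficient $\kappa^{-n_0}(l_{n_0})_{\deg_\ttu l_0}>0$. The remaining terms $\sum_{n>n_0}\kappa^{-n}(l_n)_{j-n}$ I would expand as a sum over recursion paths $\deg_\ttu l_0\ge \tau_0\ge\tau_1\ge\cdots\ge\tau_n=j-n$ through the coefficient array, each path carrying one weight per step. The decisive combinatorial fact is that an off-diagonal step strictly lowers $\tau$, so a path reaching $\tau_n=j-n$ uses at most $\deg_\ttu l_0-(j-n)$ off-diagonal steps; counting the admissible multisets of off-diagonal moves gives a stars-and-bars binomial coefficient with $\#\{(d,s):(p_{\cdot,d})_s\ne0\}$ move types, while the product of weights factors into the diagonal product $\prod(p_{n',0})_0$ times an off-diagonal factor controlled per step by the falling-factorial bound $|V|+|D_l|+(D_l-V_l)$ and by the normalized sums $h_{d,j}$. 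This is exactly the expression in \eqref{eq:siegelmodularforms:asymptotic_recursion_contribution}; its assumed boundedness (uniformly in $n$, the starting index $i$, and $j$) bounds the total error against the main term $(l_{n_0})_{\deg_\ttu l_0}$, so enlarging $\kappa$ makes $c_j>0$ for every $j>\deg_\ttu l_0$. A power series all of whose sufficiently high coefficients are positive is not polynomially bounded, which yields the asserted rapid growth as $\ttu\to\infty$.

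The main obstacle is the final paragraph: turning the iterated recursion into a weighted sum over monotone paths and matching, factor by factor, the resulting count of off-diagonal-move multisets, the diagonal weight product, and the per-step falling-factorial bounds to the binomial coefficient, the product $\prod_{n'}(p_{n',0})_0$, and the powers of $h_{d,j}$ in \eqref{eq:siegelmodularforms:asymptotic_recursion_contribution}. Keeping these estimates uniform in the indices $n$, $i$, $j$ — in particular verifying that the number of off-diagonal steps is bounded as claimed and that the diagonal weight product appearing in the error matches the normalization of the main term — is the delicate, bookkeeping-heavy step; by contrast the degree and valuation propagation and the geometric bound on the coefficients in the first two paragraphs are routine.
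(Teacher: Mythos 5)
Your proposal is correct and follows essentially the same route as the paper's proof: the first part by coefficientwise induction, bounding the falling factorials by $O\big((n|V|)^d\big)$ via the degree/valuation propagation and playing this against the hypothesis that $(n|V|)^d\,p_{n,d}$ has bounded coefficients, then choosing $\kappa$ to dominate the resulting geometric growth; and the second part by expanding $(l_n)_{j-n}$ as a weighted sum over products (paths) of monomial operators, in which the number of off-diagonal factors is bounded because each such factor strictly lowers the exponent, so that the main (diagonal, positive) term dominates once $\kappa$ is enlarged. Note that the paper itself only sketches this second part and defers the bookkeeping to the second author's thesis, so the gap you flag at the end is present at the same level in the published argument.
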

\begin{proof}
We prove the first part of Lemma \ref{la:siegelmodularforms:coefficients_of_power_series_with_recursion} and for brevity we only sketch the quite technical proof of the second part (for more details see~\cite{Raum-thesis}).

Write $(p)_j$ for the $j^\thup$ coefficient of a polynomial $p$.  Set $D_l := \deg_\ttu l_0$.  The assumptions imply that $\deg_\ttu l_n\leq D_l$ and that the leading coefficient of $l_n$ is positive. 
Let $b \ge 1$ be a bound of $\big( |{\rm val}_\ttu(l_0)| + \big(D_l - {\rm val}_\ttu (l_0) \big) + (n + 1) |V| \big)^d \, \sum_{d, j} \big|(p_{n,d})_j\big|$ for all $n$.  The valuation of $l_n$ is bounded from below by ${\rm val}_\ttu(l_0) + nV$.  Let $B$ be a bound of the absolute values of the coefficients of $l_0$.  Then an induction establishes that the absolute value of the coefficients of $l_n$ is less than $B \, b^n$.  Choosing $\kappa:= 2 b$ shows that the Laurent series (\ref{eq:siegelmodularforms:power_series_with_recursion}) is well-defined.  Moreover, the $j^\thup$ coefficient of (\ref{eq:siegelmodularforms:power_series_with_recursion}) is bounded by $2B$, since
\begin{align*}
\quad
&  
  \bigg| \sum_{n = 0}^\infty \frac{(l_n)_{j - n}}{\kappa^n} \bigg|
\le
  \sum_{n = 0}^\infty \frac{B b^n}{\kappa^n}
\le
  2 B
\text{.}
\end{align*}

To prove the second part, we will need to determine how the coefficients of $l_n$ ($n>0$) depend on those of $l_0$.  We decompose $p_{n,d}$ into monomials, and with a slight abuse of notation we write (the non-commutative product)
\begin{gather*}
  (l_n)_{j}
=
  \Bigg( \bigg( \prod_{n' < n} \sum_{d_{n'},\, j_{n'}'}   (p_{n',d_{n'}})_{j_{n'}'} \, \ttu^{j_{n'}'} \, \partial_\ttu^{d_{n'}} \bigg) \sum_i (l_0)_i \, \ttu^i \Bigg)_j
\text{,}
\end{gather*}
where one first differentiates with respect to $\partial_\ttu^{d_{0}}$, then $\partial_\ttu^{d_{1}}$, etc.  For each contribution, \mbox{$\sum_{n'} \big( d_{n'} - j_{n'}' \big)$} is bounded by $i - j$.  If $(p_{n',d_{n'}})_{j'_{n'}} \ne 0$ and $\big( d_{n'},\, j_{n'}'\big) \ne (0,0)$, then $d_{n'} - j_{n'}' > 0$, and we find that the  $j^\thup$ coefficient of $l_n$ only depends on $(l_0)_i$ by means of ``products''
\begin{gather*} 
  (p_{n,d_{n}})_{j_{n}'} \, \ttu^{j_{n}'} \, \partial_\ttu^{d_{n}} \, \cdots \,
  (p_{n',d_{n'}})_{j_{n'}'} \, \ttu^{j_{n'}'} \, \partial_\ttu^{d_{n'}} \, \cdots \,
 (p_{0,d_{0}})_{j_{0}'} \, \ttu^{j_{0}'} \, \partial_\ttu^{d_{0}} 
\;
 (l_0)_i \, \ttu^i
\text{,}
\end{gather*}
with at most $i - j$ pairs $\big( d_{n'},\, j_{n'}'\big)$ different from $(0,0)$.  The proof proceeds by using a refined version of this idea and by giving an upper bound on the number these products.
\end{proof}

Consider $T > 0$.  Set $\ttv = 0$ in $\sum_{n = 0}^\infty g_n(2 \pi \text{\tt u})\, (4 \pi^2 \text{\tt v})^n$ and use condition $(3)$ of Definition~\ref{HSM} to see that $g_0$ is of moderate growth. The solution $\phi(\ttu)=0$ to the differential equation in $(c)$ of Theorem~\ref{thm:siegelmodularforms:harmonicfourierexpansions} gives $g_0(\ttu): = c\, \ttu^{1 - k}$ for some $c\in\C$.  Let $l_0(\ttu): = \ttu^{1 - k}$ and $l_n:=g_n$ as in $(c)$ of Theorem~\ref{thm:siegelmodularforms:harmonicfourierexpansions}.  We find that the hypotheses of Lemma~\ref{la:siegelmodularforms:coefficients_of_power_series_with_recursion} are satisfied (for details see~\cite{Raum-thesis}).  Choose $\kappa$ according to the second part of Lemma~\ref{la:siegelmodularforms:coefficients_of_power_series_with_recursion}, such that $\sum_{n = 0}^\infty l_n (\ttu) \left(\tfrac{\ttu}{\kappa}\right)^n$ is well-defined.  In particular, we can choose $\kappa$ such that $\sum_{n = 0}^\infty l_n (\ttu) \, \ttv^n$ grows rapidly as $\ttu \rightarrow \infty$, where $\ttv = \frac{\ttu}{\kappa}$.  The $M$-Whittaker function is another solution to the differential equation in $(c)$ of Theorem \ref{thm:siegelmodularforms:harmonicfourierexpansions}, but $M_{1 - k, (\sgn k)\, (k - \frac{3}{2})} (2 \tilde{\text{\tt u}})$ grows rapidly as $\tilde{\text{\tt u}} \rightarrow \infty$.  We conclude that the solutions to $g_0$ in $(c)$ of the remarks to Theorem~\ref{thm:laplacesiegelfourierexpansion} lead to rapidly growing Fourier coefficients $a(Y,T)$, which proves the case $T > 0$.

\vspace{1ex}

Consider the case $T$ indefinite.  We will need the following lemma.



\begin{lemma}
\label{la:siegelmodularforms:hypergeometriccoefficientgrowth}
Suppose that $k < 0$.  The ratio of the coefficient of $\ttv^n$ of the power series expansion
\begin{gather*}
  (\tfrac{\ttv}{4})^{\frac{3}{2} - k}
  \hypergeometric{2}{3} \bigl( 1, 2 - k; \tfrac{5}{2} - k, 2 - \tfrac{k}{2}, \tfrac{5 - k}{2};
                               \tfrac{\ttv}{4} \bigr)
\end{gather*}
and the coefficient of $\ttv^n$ of the power series expansion
\begin{gather*}
  (\tfrac{\ttv}{4})^{-\frac{k}{2}}
  \hypergeometric{1}{2} \bigl( \tfrac{1 - k}{2}; \tfrac{1}{2}, 1 - \tfrac{k}{2}; \tfrac{\ttv}{4} \bigr)
\end{gather*}
tends to zero as $n \rightarrow \infty$.

In particular, any linear combination of the first and the second generalized hypergeometric series in $(d)$ of the remarks to Theorem~\ref{thm:laplacesiegelfourierexpansion} grows rapidly as $\ttv \rightarrow \infty$.
\end{lemma}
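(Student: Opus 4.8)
The plan is to reduce both sides to explicit ratios of Pochhammer symbols and then read off the asymptotics from Stirling's formula, and to deduce the rapid growth from positivity of the Taylor coefficients. First I would strip off the monomial prefactors $(\tfrac{\ttv}{4})^{\frac32-k}$ and $(\tfrac{\ttv}{4})^{-\frac{k}{2}}$: each contributes only an overall $k$\nbd dependent constant and so does not affect whether the ratio of $n$\nbd th coefficients tends to $0$. Thus it suffices to compare the $n$\nbd th Taylor coefficients
\[
a_n:=\frac{\pochhammer{2-k}{n}}{\pochhammer{\tfrac52-k}{n}\,\pochhammer{2-\tfrac{k}{2}}{n}\,\pochhammer{\tfrac{5-k}{2}}{n}},\qquad b_n:=\frac{\pochhammer{\tfrac{1-k}{2}}{n}}{\pochhammer{\tfrac12}{n}\,\pochhammer{1-\tfrac{k}{2}}{n}\,n!}
\]
of the two series, where I used $\pochhammer{1}{n}=n!$ to cancel one factor in the $\hypergeometric{2}{3}$. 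For odd $k<0$ every parameter occurring here is positive, so all Pochhammer symbols are positive, no degenerate (pole or vanishing) case arises, and $a_n,b_n>0$.

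Next I would write $\pochhammer{x}{n}=\Gamma(x+n)/\Gamma(x)$ and apply $\Gamma(n+x)/\Gamma(n+y)\sim n^{x-y}$. Both series are of the form $\hypergeometric{p}{q}$ with $q-p=1$, hence entire of order $\tfrac12$ with coefficients decaying like $(n!)^{-2}$ times a power $n^{S}$, where $S=\sum(\text{numerator parameters})-\sum(\text{denominator parameters})$; this is the standard Stirling consequence. Here $S_{A}=(3-k)-(7-2k)=k-4$ for the $\hypergeometric{2}{3}$ and $S_{B}=\tfrac{1-k}{2}-(\tfrac32-\tfrac{k}{2})=-1$ for the $\hypergeometric{1}{2}$, so the factorial parts cancel in the quotient and
\[
\frac{a_n}{b_n}\sim c\,n^{\,S_A-S_B}=c\,n^{\,k-3}\qquad(n\to\infty)
\]
for an explicit nonzero constant $c=c(k)$ (a ratio of Gamma values at positive arguments). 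Since $k<0$ gives $k-3<0$, the ratio tends to $0$, which is the first assertion; equivalently one checks that in $a_n/b_n$ there are four Gamma factors in numerator and four in denominator, so the power is exactly $n^{k-3}$.

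Finally, for the ``in particular'' I would run the identical computation with the first generalized hypergeometric series of $(d)$, namely $\hypergeometric{1}{2}(1-\tfrac{k}{2};\tfrac32,\tfrac{3-k}{2};\cdot)$, whose parameter excess is $S=-2$; comparing it with the $\hypergeometric{2}{3}$ gives exponent $(k-4)-(-2)=k-2<0$, so the $\hypergeometric{2}{3}$ coefficients are again negligible. Writing the first and second series of $(d)$ as $f=\sum p_n\ttv^{n}$ and $g=\sum q_n\ttv^{n}$ with $p_n,q_n>0$ and $q_n/p_n\to0$, the elementary Abelian estimate for positive\nbd coefficient power series yields $g(\ttv)/f(\ttv)\to0$ as $\ttv\to\infty$ (split the sum at $N$ with $q_n\le\varepsilon p_n$ for $n\ge N$ and use $f(\ttv)\to\infty$). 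Hence for $\alpha\ne0$ the combination $\alpha f+\beta g$ is asymptotic to $\alpha f$ and grows rapidly like $f$, while for $\alpha=0$ it equals $\beta g$, which has positive coefficients and also grows rapidly; so every nontrivial combination grows rapidly. The step I expect to be the main obstacle is the bookkeeping in the second paragraph: making sure the factorial parts genuinely cancel (i.e.\ that both series have the same order $\tfrac12$ and the same exponential rate) so that the single surviving power $n^{k-3}$ is exactly what governs the ratio, and keeping the constant $c(k)$ finite and nonzero across all odd $k<0$; the growth deduction is then routine given positivity of the coefficients.
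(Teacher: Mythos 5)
Your opening reduction---``strip off the monomial prefactors: each contributes only an overall $k$\nbd dependent constant''---is a genuine gap, and it is fatal rather than cosmetic. The two prefactors are \emph{different} powers of $\ttv$: their exponents differ by $c:=\tfrac{3-k}{2}$, which is a positive integer because $k$ is odd. Hence the coefficient of $\ttv^n$ of the first displayed function is your $a_i$ (with $n=i+\tfrac{3}{2}-k$), while the coefficient of $\ttv^n$ of the second is $b_{i+c}$; the quantity in the lemma is the \emph{shifted} ratio $a_i\slashdiv b_{i+c}$, not $a_n\slashdiv b_n$. Since each coefficient sequence has one Pochhammer symbol upstairs against three downstairs (counting $n!$), shifting its index by $c$ rescales it by $\asymp n^{-2c}$, so passing from $a_i\slashdiv b_i$ to $a_i\slashdiv b_{i+c}$ multiplies your ratio by $\asymp n^{2c}=n^{3-k}$, which exactly cancels the $n^{k-3}$ you extract from Stirling. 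The cancellation is in fact exact: using $\pochhammer{x}{i+c}=\pochhammer{x}{c}\,\pochhammer{x+c}{i}$ together with $\tfrac{1-k}{2}+c=2-k$, $\tfrac{1}{2}+c=2-\tfrac{k}{2}$, $1-\tfrac{k}{2}+c=\tfrac{5}{2}-k$, $1+c=\tfrac{5-k}{2}$, one finds
\begin{gather*}
  b_{i+c}
  =
  \frac{\pochhammer{\frac{1-k}{2}}{c}}
       {\pochhammer{\frac{1}{2}}{c}\,\pochhammer{1-\frac{k}{2}}{c}\;c!}\;
  \frac{\pochhammer{2-k}{i}}
       {\pochhammer{\frac{5}{2}-k}{i}\,\pochhammer{2-\frac{k}{2}}{i}\,\pochhammer{\frac{5-k}{2}}{i}}
  =
  K\,a_i
  \qquad (K>0)
  \text{,}
\end{gather*}
i.e.\ the two displayed functions agree up to a polynomial and the nonzero constant $K$, so the like-power ratio tends to $1\slashdiv K$ rather than to $0$. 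No adjustment of the constant $c(k)$ can repair this; the step you dismissed as bookkeeping is precisely where the content lies. The paper's own proof is organized around this very point: it first rewrites the second function, modulo a polynomial, as a series carrying the \emph{same} prefactor $(\tfrac{\ttv}{4})^{\frac{3}{2}-k}$ as the first (namely as $(\tfrac{\ttv}{4})^{\frac{3}{2}-k}\hypergeometric{1}{2}(2-k;2-\tfrac{k}{2},\tfrac{5-k}{2};\tfrac{\ttv}{4})$), and only then compares coefficients of like powers of $\ttv$; your unshifted Stirling estimate computes a different quantity than the one the lemma states and later uses. (Your Pochhammer bookkeeping, run \emph{with} the shift as above, is also the right tool to test that rewriting step of the paper: it returns the $\hypergeometric{2}{3}$, i.e.\ a multiple of the first function, not the claimed $\hypergeometric{1}{2}$---so the delicacy here is real and not merely formal.)

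The same oversight undoes your ``in particular'' paragraph. There you compare the $\hypergeometric{2}{3}$ against the first series of $(d)$ for $k<0$, whose prefactor is $\ttv^{\frac{1-k}{2}}$. The exponent offset is now $\tfrac{3}{2}-k-\tfrac{1-k}{2}=1-\tfrac{k}{2}$, a \emph{half}-integer, so the two expansions are supported on disjoint sets of powers of $\ttv$ (half-odd integers versus integers): there is no common power at which to form a ratio, and your Abelian splitting argument, which needs $q_n\le\varepsilon\,p_n$ at the same powers, does not apply as written. Nor does separate positivity of the two coefficient sequences by itself exclude cancellation as $\ttv\to\infty$, because all of these ${}_p{\rm F}_{p+1}$-type solutions grow at the same exponential scale $e^{\sqrt{\ttv}}$ times algebraic factors; controlling coefficients at like powers is exactly what the lemma is for, and it is the part your argument skips.
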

\begin{proof}
The second generalized hypergeometric function of Lemma \ref{la:siegelmodularforms:hypergeometriccoefficientgrowth} equals, up to a polynomial,
\begin{gather*}
  (\tfrac{\ttv}{4})^{\frac{3}{2} - k}
  \hypergeometric{1}{2} \bigl( 2 - k; 2 - \tfrac{k}{2}, \tfrac{5 - k}{2};
                               \tfrac{\ttv}{4} \bigr)
\text{.}
\end{gather*}
This allows us to compute the ratio of the coefficients of $\ttv^{\td n}$, which tends to zero as ${\td n} \rightarrow \infty$.

The rapid growth of the linear combinations of the generalized hypergeometric series in Lemma \ref{la:siegelmodularforms:hypergeometriccoefficientgrowth} follows, since the coefficients of said linear combinations are almost all positive or almost all negative.
\end{proof}

We now argue that for every $k \in \ZZ$ the three fundamental solutions given in $(d)$ of the remarks to Theorem~\ref{thm:laplacesiegelfourierexpansion} lead to rapidly growing Fourier coefficients $a(Y,T)$.  If the solution is a Laurent polynomial, then this follows from Lemma \ref{la:siegelmodularforms:coefficients_of_power_series_with_recursion}.  If not, it follows by setting $\ttu = 0$ in $(d)$ of Theorem \ref{thm:laplacefourierexpansion} and the fact that nonpolynomial generalized hypergeometric series grow rapidly towards infinity.

If $k \ge 3$, then the space of solutions for $h_1$ in $(d)$ of the remarks to Theorem~\ref{thm:laplacesiegelfourierexpansion} is spanned by two polynomials and a generalized hypergeometric series.  In fact, the first and third solution given in $(d)$ of the remarks to Theorem~\ref{thm:laplacesiegelfourierexpansion} are, up to polynomials, multiples of each other.  This can be seen by analyzing the Laurent series expansion of both solutions with respect to $\ttv$ (see~\cite{Raum-thesis} for more details).  Any solution that occurs must be a linear combination of the two polynomials only, since otherwise, $a(Y, T)|_{\ttu = 0}$ grows rapidly.  On the other hand, nonvanishing polynomial solutions lead to rapidly growing $a(Y,T)$ by Lemma~\ref{la:siegelmodularforms:coefficients_of_power_series_with_recursion}, as in the case of $T > 0$.  Hence neither of the three solutions can occur.

We have to use a different argument if $k < 0$.  Lemma \ref{la:siegelmodularforms:hypergeometriccoefficientgrowth} shows that any nonzero linear combination of the second and third solution grows rapidly.  Lemma~\ref{la:siegelmodularforms:coefficients_of_power_series_with_recursion} allows us to exclude the first solution, which coincides, up to a polynomial, with a multiple of the third solution.  This yields the claim.

\vspace{1ex}

Finally, if $T>0$ and $k>3$, then we will employ the asymptotic behavior of $g_0$ to show that any possible nontrivial solution for $a(Y,T)$ is not in the kernel of $\xi_{\frac{1}{2}, k - \frac{1}{2}}^{(2)}$.  We need to consider $\phi$ to obtain the asymptotic behavior of $g_0$.   For generic $k$ the solutions for $\phi$ are the two generalized hypergeometric series
\begin{align*}
e^{-\tilde u} {\tilde u}^{k - 1} {}_1 {\rm F}_1 (1; 4 - 2 k; \tilde u)
\quad\text{and}\\
e^{-\tilde u} {\tilde u}^{2 - k} {}_1 {\rm F}_1 (2 k - 2; 2 k - 2; \tilde u)
\text{.}
\end{align*}

The following calculations can be performed with Sage \cite{sage}.  The Laurent series expansions of the solutions to $\phi$ around $\tilde u = 0$ yield Laurent series expansions of $\int {\td \ttu}^{-1} \phi(2 {\td \ttu}) \, d{\td \ttu}$, i.e., (up to additive constants) Laurent series expansions of $\psi$.  We can consider these Laurent series expansions as asymptotic expansions for $\psi$ as $u \rightarrow 0^+$.  Consequently, we may multiply the resulting expansions for $(\partial_Z \, \,a(Y, I_2) e^{2\pi i\tr X})\, e^{-2\pi i\tr X}$ by $\det Y^{k - \frac{1}{2}} \sim (\frac{u}{2})^{2k - 1}$.

Not all generalized hypergeometric series here are defined for integral $k$, but linear combinations admit analytic continuations (for details see~\cite{Raum-thesis}).  One finds that a linear combination of the generalized hypergeometric series above can only be in the kernel of $\xi_{\frac{1}{2}, k - \frac{1}{2}}^{(2)}$ if the limit of the asymptotic expansion of the linear combination of the corresponding $g_0$ tends to zero.  An inspection of the initial exponent of this expansion shows that this is not the case.

\end{proof}

\section{Harmonic skew-Maass-Jacobi forms}
\label{sec:skew-Jacobi}
The classical Jacobi forms in Eichler and Zagier \cite{EZ} are holomorphic functions.  
More generally, the Maass-Jacobi forms in Berndt and Schmidt \cite{BS}, Pitale \cite{Ameya}, and in \cite{BRR-H-Harmonic,B-R-Maass-Jacobi}
are real-analytic functions that are eigenfunctions of differential operators invariant under the action of the extended real Jacobi group.
Another important class of Jacobi forms are Skoruppa's \cite{Sko-mpim, Sko-Invent90} skew-holomorphic Jacobi forms, which are 
real-analytic in $\tau\in\HH$, holomorphic in $z\in\C$, and annihilated by the heat operator 
$$
L_m:=8 \pi im \partial_{\tau}-\partial_{zz}.
$$  
We now introduce necessary notation to define harmonic skew-Maass-Jacobi forms, which are real-analytic extensions of skew-holomorphic Jacobi forms.

Let $\Gamma^\rmJ:=\SL{2}(\Z)\ltimes \Z^2$ be the Jacobi group.  For fixed integers $k$ and $m$, define the following slash operator on functions $\phi:\HH \times\C\rightarrow\C$\,:
\begin{equation}
\label{skew-slash}
\begin{split}
& 
\Bigl(\phi\,\big|_{k,m}^{\sk} A \Bigr)(\tau,z)
\\ & 
:=\phi\left(\frac{a\tau+b}{c\tau+d},\frac{z+\lambda\tau+\mu}{c\tau+d}\right)(c\overline{\tau}+d)^{1-k}\,|c\tau+d|^{-1}\, e^{2\pi im\left(-\frac{c(z+\lambda\tau+\mu)^2}{c\tau+d}+\lambda^2\tau+2\lambda z\right)}
\end{split}
\end{equation}
for all $A=\left[\left(\begin{smallmatrix}a & b\\c & d\end{smallmatrix}\right), (\lambda, \mu)\right]\in\Gamma^\rmJ$.  Note that (\ref{skew-slash}) 
can be extended to an action $|^{\sk, \R}_{k,m}$ of the extended real Jacobi group on $\C^{\infty}\left(\HH\times\C\right)$.  The center of the universal enveloping algebra of the extended real Jacobi group is generated by a linear element and a cubic element, the {\it Casimir element}.  The linear element acts by scalars under $|^{\sk, \R}_{k,m}$ and the action of the Casimir element under $|^{\sk, \R}_{k,m}$ is given (up to the constant $8\pi i m\bigl(\frac{5}{8}+\frac{3(1-k)-(1-k)^2}{2}+1-2k\bigr)=8\pi i m\bigl(\frac{21}{8}-\frac{5k+k^2}{2}\bigr)$) by the following differential operator:
\begin{equation*}
\begin{split}
\mathcal{C}_{k,m}^{\sk} := & -2(\tau-\overline{\tau})^2\partial_{\overline{\tau}}L_m+(2k-1)(\tau-\overline{\tau})L_m \\ 
& +2(1-k)(\tau-\overline{\tau})\partial_{z\overline{z}}+ 2(\tau-\overline{\tau})(z-\overline{z})\partial_{zz\overline{z}}\\ & 
-16\pi i m(\tau-\overline{\tau})(z-\overline{z})\partial_{\tau\overline{z}}+8\pi i m(1-k)(z-\overline{z})\partial_{\overline{z}}\\ &+2(\tau-\overline{\tau})^2\partial_{\tau\overline{z}\overline{z}}+\left(4\pi im(z-\overline{z})^2+(\tau-\overline{\tau})\right)\partial_{\overline{z}\overline{z}}+ 2(\tau-\overline{\tau})(z-\overline{z})\partial_{z\overline{z}\overline{z}}\,.\\
\end{split}
\end{equation*}
In particular, $\mathcal{C}_{k,m}^{\sk}$ commutes with the action in (\ref{skew-slash}), i.e.,  if $A \in \Gamma^\rmJ$, then 
\begin{equation*}
 \left(\mathcal{C}_{k,m}^{\sk} \phi \right)\big|_{k,m}^{\sk} A=\mathcal{C}_{k,m}^{\sk} \left(\phi \big|_{k,m}^{\sk} A\right).
\end{equation*}

\vspace{1ex}

\begin{defn}
\label{sMJ}
A real-analytic function $\phi:\HH \times\C\rightarrow\C$ is a harmonic skew-Maass-Jacobi form of weight $k$ and index $m>0$ if the following conditions hold:

\vspace{0.5ex}

\begin{enumerate}
\item
For all $A\in\Gamma^\rmJ$, $\phi\,\big|_{k,m}^{\sk} A=\phi$.

\vspace{0.5ex}

\item
We have that $\mathcal{C}_{k,m}^{\sk}(\phi)=0$.

\vspace{1ex}

\item
We have that $\phi(\tau,z)=O\hspace{-0.5ex}\left(e^{ay} e^{2 \pi mv^2/y} \right)$ as $y\ra\infty$ for some $a>0$, and where $y=\mathrm{Im}(\tau)$ and $v=\mathrm{Im}(z)$.
\end{enumerate}

\vspace{1ex}

\noindent
We are especially interested in harmonic skew-Maass-Jacobi forms, which are holomorphic in $z$; we denote the space of such forms by $\widehat{\mathbb{J}}_{k,m}^{\sk}$.
\end{defn}

\vspace{1ex}

\begin{remarks}
\ 
\begin{enumerate}[(a)]
\item
One finds that every $\phi\in\widehat{\mathbb{J}}_{k,m}^{\sk}$ has a Fourier expansion of the form 
\begin{equation} 
\label{Fourier-skew}
\begin{split}
\phi(\tau,z)= & y^{\frac32-k}  \sum_{\substack{n, r \in\Z \\D=0  } } c^0(n,r)q^n\zeta^r\\ & 
+ \sum_{\substack{n, r \in\Z \\D \gg -\infty} } c^+(n,r)e^{-\frac{\pi Dy}{m}} q^n\zeta^r
+ \sum_{\substack{n, r \in\Z \\D \ll \infty   } } c^-(n,r)
H\hspace{-0.2ex}\left(\frac{\pi Dy}{2m}  \right) e^{-\frac{\pi Dy}{2m}} q^n\zeta^r.\\
\end{split}
\end{equation}
Here $D:=r^2-4mn$ and $H(w):=e^{-w}\int_{-2w}^{\infty}e^{-t}t^{\frac{1}{2}-k}dt$ converges 
for $k<\frac{3}{2}$ and has a holomorphic continuation in $k$ if $w\not=0$ and if $w<0$, 
then $H(w)=e^{-w}\, \Gamma(\frac{3}{2}-k,-2w)$ (see also page 55 of \cite{B-F-Duke04}).

\vspace{1ex}

\item
If $c^{0}(n,r)=0$ and $c^{-}(n,r)=0$ in (\ref{Fourier-skew}), then $\phi$ is a weak skew-holomorphic Jacobi form as in \cite{B-R-Maass-Jacobi}. 
If, in addition, $c^{+}(n,r)=0$ for all $D<0$ (resp. $D\leq 0$), then $\phi$ is a skew-holomorphic Jacobi form (resp. skew-holomorphic Jacobi cusp form) 
of weight $k$ and index $m$ as in \cite{Sko-mpim, Sko-Invent90}.  We denote the spaces of weak skew-holomorphic Jacobi forms and skew-holomorphic Jacobi forms, each of weight $k$ and index $m$, by $\rmJ_{k,m}^{\sk!}$ and $\rmJ_{k,m}^{\sk}$, respectively. 

\vspace{1ex}

\item The harmonic Maass-Jacobi forms in \cite{B-R-Maass-Jacobi} are real-analytic functions $\phi:\HH \times\C\rightarrow\C$ which are in the kernel of 
$C^{k,m}:=\frac{1}{8\pi i m}\left(y^{\frac{1}{2}-k}\mathcal{C}_{1-k,m}^{\sk}y^{k-\frac{1}{2}}+2k-1\right)$ and invariant under the usual Jacobi slash-operator $\big|_{k,m}$.  Recall that $\big|_{k,m}$ is as in (\ref{skew-slash}), except that $(c\overline{\tau}+d)^{1-k}\,|c\tau+d|^{-1}$ in (\ref{skew-slash}) is replaced by $\left(c\tau+d\right)^{-k}$.  Note also that $C^{\frac{1}{2},m}=\frac{1}{8\pi i m}\mathcal{C}_{\frac{1}{2},m}^{\sk}$.

\vspace{1ex}

\item 
Bruinier and Funke's differential operator $\xi_k$ plays an important role in the theory of harmonic weak Maass forms.  
The differential operator $\xi_{k,m}$ in \cite{B-R-Maass-Jacobi} is the corresponding operator for harmonic Maass-Jacobi forms, 
and there is also an analogous operator $\xi_{k,m}^{\sk}$ for harmonic skew-Maass-Jacobi forms.  Specifically, note that
$$
D_-^{\sk}:=\frac{y^2}{4\pi m}\,L_m
$$ 
is a ``lowering'' operator, i.e., if $\phi$ is a smooth function on $\HH\times\C$ and if $A\in\Gamma^\rmJ$, then
\begin{equation*}
\label{lowering}
\left(D_-^{\sk}\phi\right)\,\big|_{k-2,m}^{\sk} A =D_-^{\sk}\left(\phi\,\big|_{k,m}^{\sk} A\right) \,.
\end{equation*}
Set
\begin{equation}
\label{xi_J^{sk}}
\xi_{k,m}^{\sk}:=y^{k-\frac{5}{2}}D_-^{\sk}=\frac{y^{k-\frac{1}{2}}}{4\pi m}\,L_m.
\end{equation}
Then a direct computation shows that
$$
\xi_{k,m}^{\sk}:\widehat{\mathbb{J}}_{k,m}^{\sk}\rightarrow \rmJ_{3-k,m}^!,
$$
where $\rmJ_{k,m}^!$ denotes the space of weak Jacobi forms of weight $k$ and index $m$ (see also \cite{B-R-Maass-Jacobi}).  
The main results of \cite{B-R-Maass-Jacobi} can be extended to harmonic skew-Maass-Jacobi forms.  Specifically, 
one can define skew-Maass-Jacobi-Poincar\'{e} series, which are mapped under $\xi_{k,m}^{\sk}$ to holomorphic Jacobi-Poincar\'{e} series 
and which satisfy Zagier-type dualities when $k$ is replaced by $3-k$.
\end{enumerate}
\end{remarks}

\section{Kohnen's limit process}
\label{sec:limitprocess}

In this section, we will first employ Kohnen's work \cite{Ko-poincare} to find the limit (\ref{Kohnen's limit}) in case of the Poincar\'{e}-Eisenstein series $P_{k,s}$ in (\ref{Poincare-Eisenstein series}) for $s=0$ and $s=\frac{3}{2}-k$.  This will then allow us to perform the limit process for arbitrary $F \in \widehat{\mathbb{M}}_k$ and to prove Theorem~\ref{main}.



\begin{thm}
\label{thm:eisensteinlimit}
Let $\phi_m(\tau,z,y')$ be the $m$-th Fourier-Jacobi coefficient of $P_{k,s}(Z)$ as in (\ref{eq:fourierjacobiexpansion}).  
If $m>0$, then the limit in (\ref{Kohnen's limit}) exists for $s=0, k>3$ and for $s = \frac{3}{2} - k, k < 0$, and we have:
\begin{enumerate}[(a)]
\item If $s = 0$ and $k > 3$, then $y^{\frac{1}{2} - k\,} \mathcal{L}\bigl(\det Y^{k - \frac{1}{2}\,} \phi_m\bigr)\in \rmJ_{k,m}^{\sk}$.

\vspace{1ex}

\item If $s = \frac{3}{2} - k$ and $k < 0$, then $y^{\frac{1}{2} - k\,}\mathcal{L}\bigl(\det Y^{k-\frac{1}{2}\,} \phi_m\bigr)\in \widehat{\mathbb{J}}_{k,m}^{\sk}$.
\end{enumerate}
Moreover, the limits in $(a)$ and $(b)$ are not identically zero ($k$ is odd by assumption). 
\end{thm}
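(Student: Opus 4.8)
The plan is to start from the explicit Fourier expansion of $P_{k,s}$ supplied by Theorem~\ref{thm:laplacesiegelfourierexpansion} (legitimate since $P_{k,s}\in\widehat{\mathbb{M}}_k$ by Proposition~\ref{Poincare series is harmonic}), to evaluate Kohnen's limit coefficient by coefficient, and to compare the outcome with the Fourier expansion~(\ref{Fourier-skew}) describing elements of $\widehat{\mathbb{J}}_{k,m}^{\sk}$. For the $m$-th Fourier--Jacobi coefficient the relevant indices are $T=\left(\begin{smallmatrix} n & r/2\\ r/2 & m\end{smallmatrix}\right)$, and with $D:=r^2-4mn$ one has $D>0$ for indefinite $T$ (case~(d)), $D=0$ for rank-one $T\ge 0$ (case~(b)), and $D<0$ for $T>0$ (case~(c)). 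The first step is the elementary but decisive observation that the substitution $y'=\frac{\delta}{4\pi m}+\frac{v^2}{y}$ in~(\ref{Kohnen's limit}) forces $\det Y=\frac{y\delta}{4\pi m}$ and $\ttu=\tr(YT)=\frac{\delta}{4\pi}+\bigl(yn+rv+\frac{mv^2}{y}\bigr)$, so the normalizing factor $e^{\delta/2}e^{2\pi m v^2/y}$ is tuned exactly to cancel the universal decay $e^{-2\pi\ttu}$ of the Whittaker and incomplete Gamma solutions, leaving the Jacobi exponential $e^{-2\pi(yn+rv)}$ that reassembles into $q^n\zeta^r$.

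For part~(a), Proposition~\ref{xi of Poincare series} gives $\xi_{\frac12,k-\frac12}^{(2)}(P_{k,0})=0$, and the ``if and only if $c_1=0$'' clauses of Theorem~\ref{thm:laplacesiegelfourierexpansion} then show that the $T>0$ coefficients ($D<0$) vanish identically while the rank-one coefficients ($D=0$) retain only their $W$-Whittaker solution. Feeding the asymptotics~(\ref{Whittaker growth}) and~(\ref{incomplete Gamma growth}) into the limit, after multiplication by $\det Y^{k-\frac12}$ and $y^{\frac12-k}$, I expect the $D=0$ coefficients to converge to multiples of $q^n\zeta^r$ and the indefinite coefficients ($D>0$) to converge to the skew-holomorphic pieces $c^+(n,r)e^{-\frac{\pi Dy}{m}}q^n\zeta^r$. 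As only $c^+$-terms supported on $D\ge 0$ survive, the limit is a skew-holomorphic Jacobi form. This is precisely the Eisenstein case treated by Kohnen, since $P_{k,0}=E_{\frac12,k-\frac12}$, so much of the bookkeeping can be imported from~\cite{Ko-poincare} once the $\det Y$-normalizations are reconciled.

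For part~(b) the situation genuinely differs: $\xi_{\frac12,k-\frac12}^{(2)}(P_{k,s})\ne 0$, so the $c_1$-solutions persist and the $T>0$ coefficients $c_1\sum_{n\ge0}g_n(2\pi\ttu)(4\pi^2\ttv)^n$ must be carried through the limit. Here the power series is evaluated near the boundary of its region of convergence, because $\ttv=\ttu^2-4\det Y\det T$ satisfies $\ttv/\ttu^2\to 1$. The clean way around this is to pass to the eigenvalues $\lambda_{1,2}=\tfrac12(\ttu\pm\sqrt{\ttv})$ of $YT$: Kohnen's degeneration sends $\lambda_1\to\infty$ while $\lambda_2\to\frac{|D|y}{4m}$ stays bounded, and in this regime the integral representation of the coefficient—whose $n=0$ datum is $g_0(\ttu)=\ttu^{1-k}\int_{\ttu}^\infty\tilde\ttu^{-1}W_{1-k,(\sgn k)(k-\frac32)}(2\tilde\ttu)\,d\tilde\ttu$—collapses, in the bounded eigenvalue, to an incomplete Gamma-function, reproducing the function $H$ and hence the nonholomorphic terms $c^-(n,r)H(\tfrac{\pi Dy}{2m})e^{-\frac{\pi Dy}{2m}}q^n\zeta^r$ of~(\ref{Fourier-skew}). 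Combined with the $D=0$ contributions, which now carry both the $c^0$-type $y^{\frac32-k}$ solution and a $c^+$-solution, this recovers the full expansion~(\ref{Fourier-skew}); the harmonicity $\mathcal{C}_{k,m}^{\sk}=0$ is then forced by the shape of these Fourier coefficients, while the $\Gamma^\rmJ$-invariance descends from the $\Sp{2}(\Z)$-invariance of $P_{k,s}$ through the compatibility of $\mathcal{L}$ with the skew Jacobi slash action~(\ref{skew-slash}).

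The hard part will be the analytic control of the series in cases~(c) and~(d) under the degenerate limit $\ttu,\ttv\to\infty$: one must justify interchanging $\lim_{\delta\to\infty}$ with the Fourier summation over $(n,r)$ and with the internal series, and extract the correct leading term even though the individual $g_n$ and generic ${}_p{\rm F}_q$ grow rapidly. I would lean on the absolute convergence of $P_{k,s}$ (valid since $2\,\mathrm{Re}(s)+k$ equals $k>3$, respectively $3-k>3$) together with the uniform growth estimates already developed for Theorem~\ref{thm:laplacesiegelfourierexpansion} in Lemmas~\ref{la:siegelmodularforms:coefficients_of_power_series_with_recursion} and~\ref{la:siegelmodularforms:hypergeometriccoefficientgrowth}. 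Finally, non-vanishing follows by exhibiting one explicit nonzero coefficient: the $D=0$ term computed above is a nonzero multiple of $q^n\zeta^r$ as soon as the corresponding Eisenstein coefficient is nonzero, and the standing assumption that $k$ is odd rules out the parity-forced vanishing that would otherwise be possible.
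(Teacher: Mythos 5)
Your route---reading off the Fourier coefficients of $P_{k,s}$ from Theorem~\ref{thm:laplacesiegelfourierexpansion} and pushing them one at a time through the limit (\ref{Kohnen's limit})---is genuinely different from the paper's, and it breaks down exactly at the place you yourself flag as ``the hard part'': the tools you propose cannot fill that hole. For $\rank(T)=2$ the substitution $y'=\frac{\delta}{4\pi m}+\frac{v^2}{y}$ gives $\ttu\sim\frac{\delta}{4\pi}$ and $\ttv=\ttu^2+\frac{yD\delta}{4\pi m}$, so $\ttv/\ttu^2\to1$: the limit is taken on the boundary of the regions $|\ttv|<\ttu^2$, respectively $\ttu^2<\ttv$, in which the expansions of Theorem~\ref{thm:laplacefourierexpansion} are valid, with both variables tending to infinity. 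In that regime you need precise asymptotics of the \emph{occurring} solution; but for indefinite $T$ the paper never writes that solution down in closed form (Theorem~\ref{thm:laplacesiegelfourierexpansion}(d) only says which solutions do \emph{not} occur), and for $T>0$ it is given only through $g_0$ plus a recursion. Lemma~\ref{la:siegelmodularforms:coefficients_of_power_series_with_recursion} and Lemma~\ref{la:siegelmodularforms:hypergeometriccoefficientgrowth} are pure growth-exclusion statements---they show the discarded solutions grow too fast---and give no asymptotics along the degenerate ray; your assertion that in part (b) the coefficient ``collapses to an incomplete Gamma function,'' reproducing $H$, is precisely the unproved hard statement, not a consequence of anything established. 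The interchange of $\lim_{\delta\to\infty}$ with the sum over $(n,r)$, and the claim that $\Gamma^\rmJ$-invariance ``descends'' to the limit (the $\phi_m$ themselves are not Jacobi-invariant), are likewise asserted rather than proved. Finally, your nonvanishing argument is circular: ``nonzero as soon as the corresponding Eisenstein coefficient is nonzero'' is exactly what has to be shown.

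The paper sidesteps all of this with one observation your proposal misses: comparing slash actions yields the identities $(\det Y)^{k-\frac12}P_{k,0}=\mathcal{E}_{1-k,k-\frac12}$ and $(\det Y)^{k-\frac12}P_{k,\frac32-k}=\mathcal{E}_{1-k,1}$, where $\mathcal{E}_{k',s'}:=\sum_{M\in\Gamma_\infty\backslash\Gamma}\bigl((\det Y)^{s'}|_{(k',0)}M\bigr)$ is the Eisenstein series of type $(k',0)$ treated by Kohnen \cite{Ko-poincare}. Kohnen's theorem---proved at the level of the coset sum, not coefficient by coefficient---states that the limit (\ref{Kohnen's limit}) applied to $\mathcal{E}_{1-k,s'}$ produces a finite linear combination of Jacobi--Poincar\'e series $\sum_{A\in\Gamma^\rmJ_\infty\backslash\Gamma^\rmJ}y^{s'}|_{1-k,m}\,A$. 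Unwinding the weights, in case (a) the limit is $y^{k-\frac12}$ times the skew-holomorphic Jacobi--Eisenstein series $\sum_{A}1|^{\sk}_{k,m}A\in\rmJ^{\sk}_{k,m}$, which is nonzero because $k$ is odd; in case (b) it is $y^{k-\frac12}$ times $\psi:=\sum_{A}y^{\frac32-k}|^{\sk}_{k,m}A\in\widehat{\mathbb{J}}^{\sk}_{k,m}$, whose nonvanishing follows since $\xi^{\sk}_{k,m}(\psi)=(\tfrac32-k)\sum_{A}1|_{3-k,m}A$ is a nonvanishing holomorphic Jacobi--Eisenstein series of weight $3-k$. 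Existence of the limit, invariance, identification of the result, and nonvanishing are thereby all delegated to Kohnen's computation; to salvage your coefficient-wise route you would essentially have to redo that computation for type $(\frac12,k-\frac12)$, which is a substantial missing piece of analysis, not a routine verification.
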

\begin{proof}
Let $k'\in\Z$ and let $s'\in\C$ such that $\mathrm{Re}(s')>\frac{3-k'}{2}$. Kohnen~\cite{Ko-poincare} considers
\begin{equation}
\label{Kohnen-Eisenstein series}
\mathcal{E}_{k',s'}(Z):=\sum_{M \in \Gamma_{\infty} \backslash \Gamma} \left((\det Y)^{s'} |_{(k',\,0)}\, M\right)(Z)
\text{,}
\end{equation}
and he points out (see p. 85 of \cite{Ko-poincare}) that applying the limit process (\ref{Kohnen's limit}) to $\mathcal{E}_{k',s'}$ yields a finite linear combination of Jacobi-Poincar\'{e} series of the form
\begin{equation}
\label{Poincare-skew}
P_{k',m,s'}(\tau,z) := \sum_{A\in\Gamma_{\infty}^\rmJ\backslash\Gamma^\rmJ}\, 
\bigl( y^{s'}\big|_{k',m}\, A\bigr)(\tau,z).
\end{equation}
Here $\Gamma_{\infty}^\rmJ:=\left\{\left[\left(\begin{smallmatrix}1 & \eta\\ 
0 &1 \end{smallmatrix}\right),(0,n)\right]\,\,|\,\,\eta,n\in\Z \right\}$ and $\big|_{k',m}$ is again the usual Jacobi slash-operator.

$(a)$ If $s'=k-\frac{1}{2}$ and $k'=1-k$ ($k>3$), then $\mathcal{E}_{1-k,k-\frac{1}{2}}(Z)=(\det Y)^{k-\frac{1}{2}}P_{k,0}(Z)$ and applying the limit process (\ref{Kohnen's limit}) gives a finite linear combination of the form
$$
P_{1-k,m,k-\frac{1}{2}}(\tau,z)=y^{k-\frac{1}{2}}\underbrace{\sum_{A\in\Gamma_{\infty}^\rmJ\backslash\Gamma^\rmJ}\, 
\bigl( 1\big|_{k,m}^{\sk}\, A\bigr)(\tau,z)}_{:=\phi(\tau,z)},
$$
where $\phi\in \rmJ_{k,m}^{\sk}$ is the usual skew-holomorphic Jacobi-Eisenstein series, which does not vanish ($k$ is odd).

$(b)$ If $s'=1$ and $k'=1-k$ ($k<0$), then $\mathcal{E}_{1-k,1}(Z)=(\det Y)^{k-\frac{1}{2}}P_{k,\frac{3}{2}-k}(Z)$ and applying the limit process (\ref{Kohnen's limit}) gives a finite linear combination of the form
$$
P_{1-k,m,1}(\tau,z)=y^{k-\frac{1}{2}}\underbrace{\sum_{A\in\Gamma_{\infty}^\rmJ\backslash\Gamma^\rmJ}\, 
\bigl( y^{\frac{3}{2}-k}\big|_{k,m}^{\sk}\, A\bigr)(\tau,z)}_{:=\psi(\tau,z)}.
$$
It is easy to check that $\psi\in \widehat{\mathbb{J}}_{k,m}^{\sk}$.  Finally, $\psi$ is not identically zero, since
$$
\xi_{k,m}^{\sk}(\psi)=(\tfrac{3}{2}-k)\sum_{A\in\Gamma_{\infty}^\rmJ\backslash\Gamma^\rmJ}\, 
\bigl( 1\big|_{3-k,m}\, A\bigr)(\tau,z)
$$
is a nonvanishing holomorphic Jacobi form of weight $3-k$ and index $m$.
\end{proof}




\ignore{
The following corollary gives the compatibility of the differential operators $\xi_{\frac{1}{2},k-\frac{1}{2}}^{(2)}$ and 
$\xi_{k,m}^{\sk}$ under Kohnen's limit process.

\vspace{1ex}

\begin{cor}
Let $k\in \Z$ and $\displaystyle F(Z)=\sum_{T} a(Y,T)e^{2 \pi i\,\tr(TX)}\in\widehat{\mathbb{M}}_k$ with Fourier-Jacobi expansion as in (\ref{eq:fourierjacobiexpansion}).  
If $k\ne 0,1,2,3$, $m > 0$, and $\xi_{\frac{1}{2}, k-\frac{1}{2}}^{(2)} (F) = \sum_{m \in \Z} \psi_m(\tau, z, y') e^{2 \pi i m x'}$, then
\begin{equation}
\label{compatible}
\mathcal{L}(\psi_m)
=
(k-\frac{3}{2})(2-k) \xi_{k,m}^{\sk}\left( y^{\frac{1}{2} - k} \mathcal{L}\bigl(\det Y^{k - \frac{1}{2}\,} \phi_m \bigr)\right)
\text{.}
\end{equation}
\end{cor}
\begin{proof}
We have to consider terms $a(Y, T) e^{2 \pi i \tr(X T)}$, where $\rank (T) = 1, T\geq 0$ or $\rank (T) = 2$ with $T>0$ or $T$ indefinite.
If $\rank (T) = 2$, then Corollary \ref{Cor of Thm 3} asserts that $a(Y, T)$ is a scalar multiple of the Fourier coefficient of $P_{k,0}$ if 
$k > 3$ or $P_{k,\tfrac{3}{2} - k}$ if $k < 0$.  We use Proposition \ref{xi of Poincare series} to find that the left hand side of (\ref{compatible}) is zero if $k>3$ and it is the $m$-th Fourier-Jacobi coefficient of the holomorphic Siegel-Eisenstein series $E_{3-k}$, 
i.e., the usual holomorphic Jacobi-Eisenstein series of weight $k$ and index $m$ ({\bf Is this really true?  No normalization needed? Yes, there is a constant, that I introduced in the statement.}). 
The proof of Theorem~\ref{thm:eisensteinlimit}  reveals that the right hand side of (\ref{compatible}) is zero if $k>3$ and it is the holomorphic Jacobi-Eisenstein series of weight $k$ and index $m$ if $k<0$.  {\bf Well, not quite... Proposition \ref{xi of Poincare series} gives also a factor of $(k-\frac{3}{2})(2-k)$.  If we really want to match this exact factor, then we would also have to compute the ``finite linear combination'' in the proof of Theorem~\ref{thm:eisensteinlimit}.  I just took a quick look and I don't see how to get this exact factor... Do you guys see this?;;; The factor only comes from the Poincaré-Eisenstein series, since all other solutions tend to zero under the Kohnen limit}

\vspace{1ex}

{\bf Martin:  Can you carefully complete the remaining case?}  If $\rank (T) = 1, T\geq 0$, then the asymptotic behavior of the $W$-Whittaker function (\ref{Whittaker growth}) shows that the second summand in $(b)$ of Theorem~\ref{thm:laplacesiegelfourierexpansion} and all its derivatives is mapped to zero by $\mathcal{L}$. ({\bf the derivatives of a rapidly decreasing function are rapidly decreasing. In our case this means $\mathcal(p(y) O(e^{-y/2}$ for ${\rm W}_{*, *}(y)$}).  Furthermore, the Kohnen-Limit of a Fourier-Jacobi coefficient of the Poincaré Eisenstein series has a nonvanishing coefficient for $q^0 \zeta^0$ ({\bf we need a better formulation of this.}) This means that there is a solution as in $(b)$ of Theorem~\ref{thm:laplacesiegelfourierexpansion} that does not tend to zero under the Kohnen limit process and that at the same time satisfies the corollary's equation ({\bf since the Poincaré-Eisenstein series do}).  As the space of solutions with $\rank T = 1, T \ge 0$ that occur in the Fourier expansion of a harmonic Siegel-Maaß form is two dimensional this shows, that the equality above holds for all harmonic Siegel-Maaß forms.
\end{proof}

}

\vspace{1ex}

Now we give the proof of our main result.
\begin{proof}[Proof of Theorem~\ref{main}]
Let $F(Z)=\sum_{T} a(Y,T)e^{2 \pi i\,\tr(TX)}\in\widehat{\mathbb{M}}_k$ with Fourier-Jacobi expansion as in (\ref{eq:fourierjacobiexpansion}), and suppose that $\xi_{\frac{1}{2}, k-\frac{1}{2}}^{(2)}(F)=0$ if $k > 3$. Write $T= \left(\begin{smallmatrix}n & r \\ r & m\end{smallmatrix}\right)$ and assume that $m>0$.  Note that if the limit $\phi:=y^{\frac{1}{2} - k\,} \mathcal{L}\bigl(\det Y^{k - \frac{1}{2}\,} \phi_m\bigr)$ exists, then it follows easily that $\phi$ satisfies conditions $(1)$ and $(3)$ of Definition \ref{sMJ}.  Moreover, if $\rank (T) = 2$, then $a(Y,T)$ has only one fundamental solution by Theorem~\ref{thm:laplacesiegelfourierexpansion}.  

Consider the case $T>0$.  If $k<0$, then the Fourier coefficients $a(T)$ (for $T>0$) of the usual holomorphic Siegel-Eisenstein series of weight $3-k$ are nonzero and Proposition \ref{xi of Poincare series} implies that the Fourier coefficients $b(Y,T)$ ($T>0$) of the Poincaré-Eisenstein series $P_{k,\tfrac{3}{2} - k}$ are nonzero.  
Hence $a(Y,T)=\lambda\cdot b(Y,T)$ for some $\lambda\in\C$ and Theorem~\ref{thm:eisensteinlimit} yields the desired result.  If $k>3$, then $a(Y,T)=0$ due to the assumption $\xi_{\frac{1}{2}, k-\frac{1}{2}}^{(2)}(F)=0$.  Note that this assumption is necessary to our argument, since the Fourier coefficients $b(Y,T)$ of $P_{k,0}$ (for $k>3$) vanish for $T>0$ as can be seen from their integral representations in $\S 18$ of \cite{Maass} or from the fact that $\xi^{(2)}_{\frac{1}{2}, k - \frac{1}{2}} \big( P_{k, 0} \big) = 0$ (Proposition \ref{xi of Poincare series}).

Consider the case $T$ indefinite.  Theorem~\ref{thm:eisensteinlimit} asserts that Kohnen's limit process applied to $P_{k,s}$ yields a nonvanishing skew-holomorphic Jacobi form if $s=0, k>3$ and a nonvanishing harmonic skew-Maass Jacobi if $s=\tfrac{3}{2} - k, k<0$.  Thus, there exists an indefinite $T'=\left(\begin{smallmatrix}* & * \\ * & m'\end{smallmatrix}\right)$ with $m'>0$ such that the coefficient $a(Y,T')$ is a scalar multiple of a nonzero Fourier coefficient of the Poincaré-Eisenstein series $P_{k,0}$ if $k > 3$ or $P_{k,\tfrac{3}{2} - k}$ if $k < 0$, and Theorem~\ref{thm:eisensteinlimit} yields the desired result for this particular $T'$.  We have to show that the limit $y' \rightarrow \infty$ of $a(Y,T) \exp(2 \pi i m \tau')$ exists for all indefinite $T$.  Observe that every indefinite index $T$ with $m>0$ can be written as $T=H T' \trans H$ for some real, invertible, upper triangular matrix $H$.  One finds that the traces and determinants of $(\trans H Y H)T'$ and $Y(H T' \trans H)=YT$ are equal, and Theorem~\ref{thm:laplacesiegelfourierexpansion} implies that $a(\trans H Y H, T')=a(Y, H T' \trans H)=a(Y,T)$.   Furthermore, if $\left(\begin{smallmatrix} * & * \\ * & d\end{smallmatrix}\right)_{22}:=d$, then $T'_{22} (\trans H Z H)_{22} \sim (H T' \trans H)_{22} Z_{22}=m\tau'$ as $y'\rightarrow\infty$.  Hence,
\begin{equation*}
\begin{split}
\lim_{y' \rightarrow \infty} a(Y, T) \exp(2 \pi i m\tau') &
=
  \lim_{y' \rightarrow \infty} a(Y, H T' \trans H) \exp(2 \pi i (H T' \trans H)_{22} Z_{22})\\
& =
    \lim_{y' \rightarrow \infty} a(\trans H Y H, T') \exp(2 \pi i T'_{22} (\trans H Z H)_{22})\\
\end{split}
\end{equation*}
exist.

It remains to consider the case with $\rank(T)=1$, $T\geq 0$.  The explicit formula for $a(Y,T)$ in $(b)$ of Theorem~\ref{thm:laplacesiegelfourierexpansion} and the asymptotic behavior of the incomplete Gamma function (\ref{incomplete Gamma growth}) and the $W$-Whittaker function (\ref{Whittaker growth}) imply that
\begin{equation}
\label{rank 1 limit}
\begin{split}
& \hspace{1.5em} y^{\frac{1}{2}-k}\mathcal{L}\left(\det Y^{k - \frac{1}{2}\,} a(Y,T)e^{2\pi i(nx+2ru)}\right)\\
& =c_1\tfrac{(4\pi)^{1-k}}{m}y^{\frac{3}{2}-k}e^{2\pi i(n\tau+2rz)}+c_2\tfrac{(4\pi)^{\frac{1-k}{2}}}{m^{k-\frac{1}{2}}}e^{2\pi i(n\tau+2rz)},\\
\end{split}
\end{equation}
where $c_1,c_2\in\C$ are the constants in $(b)$ of Theorem~\ref{thm:laplacesiegelfourierexpansion}.  Observe that the right hand side of (\ref{rank 1 limit}) is in the kernel of $\mathcal{C}_{k,m}^{\sk}$ and we conclude that $y^{\frac{1}{2} - k\,} \mathcal{L}\bigl(\det Y^{k - \frac{1}{2}\,} \phi_m\bigr)\in\widehat{\mathbb{J}}_{k,m}^{\sk}$.  If, in addition, $k>3$, then $\xi_{\frac{1}{2}, k-\frac{1}{2}}^{(2)}(F)=0$ and hence $c_1=0$ by $(b)$ of Theorem~\ref{thm:laplacesiegelfourierexpansion}.  Finally, the second term on the right hand side of (\ref{rank 1 limit}) is in the kernel of the heat operator, i.e., if $k>3$, then the right hand side of (\ref{rank 1 limit}) is a Fourier coefficient of a skew-holomorphic Jacobi form.  We conclude that if $k>3$, then $y^{\frac{1}{2} - k\,} \mathcal{L}\bigl(\det Y^{k - \frac{1}{2}\,} \phi_m\bigr)\in \rmJ_{k,m}^{\sk}$.

\end{proof}



\vspace{2ex}

\noindent
{\it Acknowledgments}:
We are indebted to \"Ozlem Imamo\=glu for introducing the problem to us, and we thank her and Charles Conley for many helpful and inspiring conversations.

\bibliographystyle{acm}
\bibliography{Lit}

\end{document}